\font\fiverm=cmr5
\let\sse=\subseteq
\let\noi=\noindent
\let\veps=\varepsilon
\let\limply=\Longrightarrow
\def\0{\{0\}}
\def\span{{\kern.5pt{\rm span}\kern1pt}}
\def\smallfrac#1#2{{\textstyle{\frac{#1}{#2}}}}
\def\conv{{\;\longrightarrow\;}}
\def\wconv{{{\buildrel_{\scriptstyle w}\over\conv}}}
\def\sconv{{{\buildrel_{\scriptstyle s}\over\conv}}}
\def\uconv{{{\buildrel_{\scriptstyle u}\over\conv}}}
\def\sslash{\hbox{{\fiverm/}}}
\def\notconv{{{\conv\kern-13pt\slash}\kern9pt}}
\def\notuconv{{{\uconv\kern-13pt\sslash}\kern9pt}}
\def\notsconv{{{\sconv\kern-13pt\sslash}\kern9pt}}
\def\notwconv{{{\wconv\kern-13pt\sslash}\kern9pt}}
\def\query{{{\buildrel_{^{\scriptstyle ?}}\over\limply}}}
\def\A{{\mathcal A}}
\def\B{{\mathcal B}}
\def\C{{\kern.5pt\mathcal C}}
\def\M{{\mathcal M}}
\def\Oe{{\mathcal O}}
\def\Pe{{\mathcal P}}
\def\R{{\mathcal R}}
\def\U{{\mathcal U}}
\def\X{{\mathcal X}}
\def\BX{{\B[\X]}}
\def\CC{{\mathbb C\kern.5pt}}
\def\FF{{\mathbb F\kern.5pt}}
\def\DD{{\mathbb D\kern.5pt}}
\def\NN{{\mathbb N\kern.5pt}}
\def\RR{{\mathbb R\kern.5pt}}
\def\TT{{\mathbb T\kern.5pt}}
\def\ZZ{{\mathbb Z\kern.5pt}}
\let\void=\varnothing
\def\newmatrix#1{\null\,\vcenter{
		\baselineskip=8pt\mathsurround=-0pt\ialign{
		\hfil ${##}$
		\hfil &&
		\hfil ${##}$
		\hfil \crcr
		\mathstrut \crcr
		\noalign{\kern-\baselineskip}#1 \crcr
		\mathstrut \crcr
		\noalign{\kern-\baselineskip} \crcr }}\!}
\newtheorem{theorem}{Theorem}
\newtheorem{lemma}{Lemma}
\newtheorem{corollary}{Corollary}
\newtheorem{proposition}{Proposition}
\theoremstyle{definition}
\newtheorem{remark}{Remark}
\numberwithin{theorem}{section}
\numberwithin{lemma}{section}
\numberwithin{corollary}{section}
\numberwithin{proposition}{section}
\numberwithin{conjecture}{section}
\numberwithin{definition}{section}
\numberwithin{remark}{section}
\numberwithin{question}{section}
\begin{document}

\vglue-72pt\noindent
\hfill{\it Mathematical Proceedings of the Royal Irish Academy}\/,
{\bf 118A} (2018) 47--56

\vglue20pt
\title{On Weak Supercyclicity I}
\author{C.S. Kubrusly}
\address{Applied Mathematics Department, Federal University,
         Rio de Janeiro, RJ, Brazil}
\email{carloskubrusly@gmail.com}
\author{B.P. Duggal}
\address{8 Redwood Grove, Northfield Avenue, Ealing, London W5 4SZ,
         United Kingdom}
\email{bpduggal@yahoo.co.uk}
\subjclass{Primary 47A16; Secondary 47A15}
\renewcommand{\keywordsname}{Keywords}
\keywords{Supercyclic operators, weak supercyclicity, weak stability.}
\date{December 7, 2017; Revised April 11, 2018}

\begin{abstract}
This paper provides conditions (i) to distinguish weak supercyclicity
form supercyclicity for operators acting on normed and Banach spaces, and
also (ii) to ensure when weak supercyclicity implies weak stability.
\end{abstract}

\maketitle

\vskip-15pt\noi
\section{Introduction}

The purpose of this paper is to establish conditions to distinguish weak
supercyclicity form supercyclicity for operators acting on normed spaces, and
also to provide conditions on weakly supercyclic operators to ensure weak
stability$.$ Sections 2 and 3 deal with notation and terminology, and also
offer a broad view on supercyclicity in the weak and norm topologies$.$
Auxiliary results are considered in Section 4$.$ Thus Sections 2, 3 and 4
present a brief survey on supercyclicity emphasizing the role played by weak
supercyclicity$.$ The new results appear in Sections 5 and 6$.$ Theorem 5.1
characterizes weakly l-sequentially supercyclic vectors that are not
supercyclic for a power bounded operator, and Theorems 6.1 and 6.2
exhibit a criterion to extend the results on supercyclicity and strong
stability of \cite{AB} to weak l-sequential supercyclicity and weak
stability$.$ The main result is Theorem 6.2, and special classes of operators
on Hilbert space are considered Corollaries 5.2 and 6.1.

\vskip6pt\noi
\section{Notation and Terminology}

Let $\FF$ stand either for the complex field $\CC$ or for the real field
$\RR$, and let $\X$ be an infinite-dimensional normed space over $\FF$$.$
Let $A^-$ denote the closure (in the norm topology of $\X$) of a set
${A\sse\X}.$ A subspace of $\X$ is a {\it closed}\/ linear manifold of $\X.$
If $\M$ is a linear manifold of $\X$, then its closure $\M^-$ is a subspace$.$
By an operator on $\X$ we mean a linear bounded (i.e., continuous)
transformation of $\X$ into itself$.$ Let $\BX$ be the normed algebra of all
operators on $\X.$ A subspace $\M$ of $\X$ is invari\-ant for an operator
${T\in\BX}$ (or $T$-invariant) if ${T(\M)\sse\M}$, and it is nontrivial if
${\0\ne\M\ne\X}.$ Let $\|T\|$ stand for the induced uniform norm of $T$ in
$\BX$.

\vskip6pt
An operator ${T\in\BX}$ is power bounded if ${\sup_{n\ge0}\|T^n\|<\infty}$,
it is a contraction if ${\|T\|\le1}$ (i.e., if ${\|T^nx\|\le\|x\|}$ for
every ${x\in\X}$ and every integer ${n\ge0}$), and it is an isometry if
$\|T^nx\|=\|x\|$ for every ${x\in\X}$ and every integer ${n\ge0}.$ (On a inner
product space, a unitary operator is precisely an invertible isometry)$.$ An
operator ${T\in\BX}$ is weak\-ly or strongly stable (notation$:$
${T^n\kern-1pt\wconv O}$ or ${T^n\kern-1pt\sconv O}$) if the $\X$-valued
sequence $\{T^nx\}_{n\ge0}\kern-1pt$ converges weakly or strongly (i.e., or in
the norm topology of $\X$) to zero for every ${x\in\X}.$ In other words, if
${T^nx\wconv0}$, which means ${f(T^nx)\to0}$ for every $f$ in the dual
$\X^*$ of $\X$, for every $x$ in $\X$; or ${T^nx\conv0}$, which means
${\|T^nx\|\to0}$, for every $x$ in $\X$$.$ An operator ${T\in\BX}$ is
uniformly stable (notation$:$ ${T^n\kern-1pt\uconv O}$) if the $\BX$-valued
sequence $\{T^n\}_{n\ge0}$ converges to the null operator $O$ in the induced
uniform norm of $\BX$, which means ${\|T^n\|\to0}$.

\vskip6pt
The orbit $\Oe_T(y)$ of a vector ${y\in\X}$ under an operator ${T\in\BX}$ is
the set
$$
\Oe_T(y)={\bigcup}_{n\ge0}T^ny=\big\{T^ny\in\X\!:\,n\in\NN_0\big\},
$$
\goodbreak\noi
where $\NN_0$ denotes the set of nonnegative integers --- we write
${\bigcup}_{n\ge0}T^ny$ for the set
${\bigcup}_{n\ge0}T^n(\{y\})={\bigcup}_{n\ge0}\{T^ny\}.$ The orbit $\Oe_T(A)$
of a set ${A\sse\X}$ under an operator $T$ is the set
$\Oe_T(A)=\bigcup_{n\ge0}T^n(A)=\bigcup_{y\in A}\Oe_T(y).$ For any set
${A\sse\X}$ let $\span\kern1pt A$ be the (linear) span of $A$ (the linear
manifold spanned by $A$)$.$ The projective orbit of a vector ${y\in\X}$ under
an operator ${T\in\BX}$ is the orbit of the one-dimensional space spanned by
the singleton $\{y\}$; that is, it is the orbit of span of $\{y\}$:
$$
\Oe_T(\span\{y\})={\bigcup}_{n\ge0}T^n(\span\{y\})
=\big\{\alpha T^ny\in\X\!:\;\alpha\in\FF,\;n\in\NN_0\big\}.
$$
A vector $y$ in $\X$ is a {\it cyclic vector}\/ for an operator $T$ in $\BX$
if $\X$ is the smallest invariant subspace for $T$ containing $y.$
Equivalently, ${y\in\kern-1pt\X}$ is a cyclic vector for $T$ if its orbit
spans $\X$:
\vskip-2pt\noi
$$
\big(\span\Oe_T(y)\big)^-\!=\X.
$$
\vskip2pt\noi
Still equivalently, $y$ is a cyclic vector for $T$ if
$\{p(T)y\!:p\;\hbox{is a polynomial}\}^-\!=\X$, which means
$\{Sy\!:{S\in\Pe(T)}\}^-\!=\X$, where $\Pe(T)$ is the algebra of all
polynomials in $T$ with scalar coefficients$.$ An operator ${T\in\BX}$ is
a {\it cyclic operator}\/ if it has a cyclic vector$.$ Stronger forms of
cyclicity are defined as follows$.$ A vector $y$ in $\X$ is a
{\it supercyclic vector}\/ for an operator $T$ in $\BX$ if its projective
orbit is dense in $\X$ in the norm topology; that is, if
$$
\Oe_T(\span\{y\})^-\!=\X.
$$
An operator $T$ in $\BX$ is a {\it supercyclic operator}\/ if it has a
supercyclic vector$.$ Moreover, a vector $y$ in $\X$ is a {\it hypercyclic
vector}\/ for an operator $T$ in $\BX$ if the orbit of $y$ is dense in $\X$
in the norm topology; that is, if
$$
\Oe_T(y)^-\!=\X.
$$
An operator $T$ in $\BX$ is a {\it hypercyclic operator}\/ if it has
a hypercyclic vector.

\vskip6pt
Versions in the weak topology of the above notions read as follows$.$ Let
$A^{-w}$ denote the weak closure of a set ${A\sse\X}$ (i.e., the closure of
$A$ in the weak topology of $\X).$ A vector $y$ in $\X$ is a {\it weakly
cyclic vector}\/ for an operator $T$ in $\BX$ if
$$
\big(\span\Oe_T(y)\big)^{-w}=\X,
$$
and $T$ in $\BX$ is a {\it weakly cyclic operator}\/ if it has a weakly cyclic
vector$.$ (Weak cyclicity, however, collapses to plain cyclicity according
to Remark 3.1(f) below)$.$ A vector $y$ in $\X$ is a {\it weakly supercyclic
vector}\/ for an operator $T$ in $\BX$ if
$$
\Oe_T(\span\{y\})^{-w}=\X,
$$
and $T$ in $\BX$ is a {\it weakly supercyclic operator}\/ if it has a weakly
supercyclic vector$.$ A vector $y$ in $\X$ is a
{\it weakly hypercyclic vector}\/ for an operator $T$ in $\BX$ if
$$
\Oe_T(y)^{-w}=\X,
$$
and $T$ in $\BX$ {\it is a weakly hypercyclic operator}\/ if it has a weakly
hypercyclic vector$.$ (For a treatise on hypercyclicity see \cite{GP}.)

\vskip3pt\noi
\begin{remark}
Although we will not deal with $n$-supercyclicity in this paper, we just
pose definitions for sake of completeness: an operator ${T\in\BX}$ is
{\it $n$-supercyclic}\/ ({\it weakly $n$-supercyclic}\/) it there is an
$n$-dimensional subspace of $\X$ whose orbit under $T$ is dense (weakly
dense) in $\X.$ So a one-supercyclic (weakly one-supercyclic) is precisely
a supercyclic (weakly supercyclic) operator$.$ For each
${n\kern-.5pt\ge\kern-.5pt1}$ there are exam\-ples of $n$-supercyclic
operators that are not ${(n\kern-1pt-\!1)}$-supercyclic (see, e.g.,
\cite[p.2]{BM}).
\end{remark}

\section{Preliminaries}

{\it If\/ $T$ has a cyclic vector, then\/ $\X$ is separable}\/ (because $\X$
is spanned by the count\-able set $\Oe_T(y)$ --- see, e.g.,
\cite[Proposition 4.9]{EOT}), and so cyclic operators exist only on
separable normed space; in particular, supercyclic and hypercyclic operators
(as well as weakly cyclic, weakly supercyclic, and weakly hypercyclic)
operators only exist on separable normed spaces (thus separability is not
an assumption, but a consequence of cyclicity).

\vskip3pt\noi
\begin{remark}
For each vector ${y\in\X}$ consider its punctured projective orbit; that is,
its projective orbit under an operator ${T\in\BX}$ excluding the origin,
$$
\Oe_T(\span\{y\})\\\0
=\big\{\alpha T^ny\in\X\!:\;\alpha\in\FF\\\0,\;n\in\NN_0\big\}\\\0.
$$
For each ${z\in\Oe_T(\span\{y\})}\\\0$ the set
${\Oe_T(\span\{y\})\\\Oe_T(\span\{z\})}$ is a finite union of one-dimensional
subspaces of $\X.$ So if ${y\in\X}$ is supercyclic (weakly supercyclic)
for $T\kern-1pt$, then every $z\in{\Oe_T(\span\{y\})\\\0}$ is supercyclic
(weakly supercyclic) for $T$:

\vskip4pt
\begin{description}
\item{$\kern-6pt$(a)$\kern1pt$}
if a vector $y$ is supercyclic or weakly supercyclic for $T\kern-1pt$, then so
is $\alpha T^my$ for every ${0\ne\alpha\in\FF}$ and every integer ${m\ge0}.$
\end{description}
\vskip2pt\noi
In particular, item (b) below on supercyclic and weakly supercyclic
vectors are immediately verified, and item (c) is straightforward since
supercyclicity is defined in terms of denseness in the norm topology
(which is metrizable).

\vskip4pt
\begin{description}
\item{$\kern-6pt$(b)$\kern1pt$}
Every nonzero multiple of a supercyclic (weakly supercyclic) vector for an
operator is again a supercyclic (weakly supercyclic) vector for it$.$ Hence
an operator is supercyclic (weakly supercyclic) if and only if any nonzero
multiple of it is supercyclic (weakly supercyclic).

\vskip4pt
\item{$\kern-6pt$(c)$\kern3pt$}
Since the norm topology is metrizable, a nonzero vector $y$ in $\X$ is a
supercyclic vector for an operator $T$ in $\BX$ if and only if for every
${x\in\X}$ there exists an $\FF$-valued sequence $\{\alpha_k\}_{k\ge0}$
(which depends on $x$ and $y$ and consists of nonzero numbers) such that for
some subsequence $\{T^{n_k}\}_{k\ge0}$ of $\{T^n\}_{n\ge0}$ the $\X$-valued
sequence $\{\alpha_kT^{n_k}y\}_{k\ge0}$ converges to $x$ (in the norm
topology)$:$ 
$$
\alpha_kT^{n_k}y\conv x.
$$
(i.e., ${\|\alpha_kT^{n_k}y\kern-1pt-\kern-1ptx\|\kern-1pt\to\kern-1pt0}).$
If\/ ${\FF=\CC}$ and $\{\alpha_k\}_{k\ge0}$ is constrained to be $\RR$-valued,
then the notion of supercyclicity is referred to as
$\RR$-{\it supercyclicity}\/ \cite{BBP}.

\vskip4pt\noi
\item{$\kern-6pt$(d)$\kern1pt$}
If a vector $y$ is supercyclic (or hypercyclic) for an operator $T\kern-1pt$,
then $y$ is su\-percyclic (or hypercyclic) for every positive power $T^n$ of
$T$ \cite[Theorems 1 and 2]{Ans}$.$ Hence if an operator is supercyclic (or
hypercyclic) then so is every positive power $T^n$ of it.
\end{description}

\vskip4pt
\begin{description}
\item{$\kern-6pt$(e)$\kern2pt$}
If an operator $T$ is supercyclic (weakly supercyclic), then the set of all
supercyclic (weakly supercyclic) vectors for it is dense (weakly dense).

\vskip6pt\noi
Indeed, if $y$ is a supercyclic (weakly supercyclic) vector for $T\kern-1pt$,
then the punctured projective orbit $\Oe_T(\span\{y\})\\\0$ is dense (weakly
dense) in $\X$$.$ But according to item (a) $\Oe_T(\span\{y\})\\\0$ is included
in the set of all supercyclic (weakly supercyclic) vectors for $T\kern-1pt$,
and so the set of all supercyclic (weakly supercyclic) vectors is dense
(weakly dense) in $\X$ as well$.$ In fact, weakly dense can be extended to
dense (in the norm topology) \cite[Proposition 2.1]{San1}.

\vskip4pt
\item{$\kern-6pt$(f)$\kern2pt$}
Denseness (in the norm topology) implies weak denseness (and the converse
fails)$.$ However, if a set $A$ is convex, then ${A^-\!=A^{-w}}$ (e.g., see
\cite[Theorem 2.5.16]{Meg}), and so cyclicity coincides with weak cyclicity,
since span is convex.

\vskip4pt
\item{$\kern-6pt$(g)$\kern2pt$}
An operator has a nontrivial invariant subspace if and only if it has a
nonzero noncyclic vector$.$ Since cyclicity coincides with weak cyclicity,
it follows that if every nonzero vector in $\X$ is cyclic for $T\kern-1pt$
in any form of cyclicity discussed here (see Diagram 1 below), then $T$ has
no nontrivial invariant subspace.
\end{description}
\end{remark}

\vskip4pt
Definitions of Section 2 and Remark 3.1(f) ensure the following relations.
\vskip2pt\noi
$$
\newmatrix{
& {\rm Hypercyclic} & \limply & {\rm Weakly\;\;Hypercyclic} \phantom{\Big|} \cr
& \big\Downarrow    &         & \big\Downarrow \phantom{\Big|}              \cr
& {\rm Supercyclic} & \limply & {\rm Weakly\;\;Supercyclic} \phantom{\Big|} \cr
& \big\Downarrow    &         & \big\Downarrow \phantom{\Big|}              \cr
& {\rm Cyclic}      & \iff    & {\rm Weakly\;\/Cyclic}.\phantom{\Big|}     \cr}
$$
\vskip1pt\noi
$\kern-9pt$\centerline {Diagram 1.}
\vskip6pt\noi
Thus cyclicity (i.e., cyclicity the norm topology), which coincides with
weak cyclicity, is the weakest form (in the sense that it is implied by the
other forms) of cyclicity among those notions of cyclicity considered here.

\vskip3pt\noi
\begin{proposition}
Diagram\/ $1$ is complete.
\end{proposition}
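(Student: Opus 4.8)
The plan is to read ``complete'' in the strong sense that every implication among the six notions which is not forced by following the arrows of Diagram~1 actually fails; by a short bookkeeping it then suffices to exhibit three operators: (A) one that is weakly hypercyclic but not supercyclic, (B) one that is supercyclic but not weakly hypercyclic, and (C) one that is cyclic but not weakly supercyclic. Indeed, the operator in (A), being weakly hypercyclic, is \emph{a fortiori} weakly supercyclic and (through the arrows) cyclic, while being non-supercyclic it is also non-hypercyclic; so it refutes ``weakly hypercyclic $\Rightarrow$ hypercyclic'', ``weakly hypercyclic $\Rightarrow$ supercyclic'', ``weakly supercyclic $\Rightarrow$ supercyclic'', and ``cyclic $\Rightarrow$ supercyclic'' (hence also ``cyclic $\Rightarrow$ hypercyclic''). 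The operator in (B) is supercyclic, hence weakly supercyclic and cyclic, and is not weakly hypercyclic, hence not hypercyclic; so it refutes ``supercyclic $\Rightarrow$ hypercyclic'', ``supercyclic $\Rightarrow$ weakly hypercyclic'', the two implications from weakly supercyclic to hypercyclic and to weakly hypercyclic, and ``cyclic $\Rightarrow$ weakly hypercyclic''. Finally the operator in (C) refutes ``cyclic $\Rightarrow$ weakly supercyclic''. Since cyclic $\Leftrightarrow$ weakly cyclic (Remark~3.1(f)), these exhaust the implications absent from the transitive closure of Diagram~1, so the diagram is complete once (A), (B), (C) are produced.

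For (B) take $T=\frac12 B$ on $\ell^2$, where $B$ is the (unweighted) backward unilateral shift. By Rolewicz's theorem $2B$ is hypercyclic, hence supercyclic, so by Remark~3.1(b) its nonzero scalar multiple $\frac12 B$ is supercyclic as well. On the other hand $\|\frac12 B\|<1$, so every orbit of $\frac12 B$ is norm bounded; since closed balls are convex and norm closed, they are weakly closed (Remark~3.1(f)), so no orbit of $\frac12 B$ can be weakly dense in the infinite-dimensional space $\ell^2$, and $\frac12 B$ is not weakly hypercyclic.

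For (C) take $T=S$, the (unweighted) forward unilateral shift on $\ell^2$; it is cyclic, since the orbit of $e_0$ already spans a dense linear manifold. Its adjoint $S^*=B$ has every $\mu$ with $|\mu|<1$ as an eigenvalue, with eigenvector $(1,\mu,\mu^2,\dots)\in\ell^2$, so the point spectrum $\sigma_p(S^*)$ has more than one element. It then suffices to invoke the fact that a weakly supercyclic operator $T$ has $\sigma_p(T^*)$ of cardinality at most one: if $T^*f_1=\mu_1f_1$ and $T^*f_2=\mu_2f_2$ with $f_1,f_2\ne0$ and $\mu_1\ne\mu_2$, then $f_1,f_2$ are linearly independent, so $x\mapsto(f_1(x),f_2(x))$ is a weakly continuous linear map of $\X$ onto $\FF^2$ and carries a weakly dense projective orbit $\{\alpha T^ny\}$ onto a dense subset of $\FF^2$; but that image is $\bigcup_{n\ge0}\FF\cdot(\mu_1^nf_1(y),\mu_2^nf_2(y))$, a countable union of lines through the origin whose directions $[\mu_1^nf_1(y):\mu_2^nf_2(y)]$ in $\mathbb{P}^1(\FF)$ never form a dense set (they are finite, or have a single limit point, or lie on a circle, according as $|\mu_1|=|\mu_2|$ or not) --- here $f_i(y)\ne0$, as otherwise the orbit would lie in the weakly closed proper hyperplane $\ker f_i$. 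Hence $S$ is not weakly supercyclic. (This fact about $\sigma_p(T^*)$ is standard in the theory of weak supercyclicity and could instead be cited.)

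The one non-elementary ingredient is (A): the existence of a weakly hypercyclic operator that is not supercyclic. This I would take from the literature, where various such operators have been exhibited; see \cite{BM} and \cite{San1} and the references therein. Locating (or reconstructing) the cleanest such example is the main obstacle; everything else is the transitivity bookkeeping of the first paragraph together with the two explicit operators $\frac12 B$ and $S$ above.
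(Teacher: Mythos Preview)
Your reduction to three operators is clean, and (B) and (C) are correct. Your (B) is essentially the paper's use of $S^*$ (a supercyclic contraction, hence with bounded orbits that cannot be weakly dense), and your (C) gives a self-contained point-spectrum argument in place of the paper's appeal to the Bayart--Matheron result that a weakly supercyclic hyponormal operator must be a multiple of a unitary. Both are legitimate alternatives.

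The gap is (A). You need an operator that is weakly hypercyclic but \emph{not supercyclic}, and you point to \cite{BM} and \cite{San1} for it. Neither reference contains such an example: \cite{San1} builds weakly \emph{supercyclic} operators that are not supercyclic, and \cite{BM} concerns hyponormal operators, which are never weakly hypercyclic. The relevant literature gives you two separate operators --- one weakly hypercyclic but not hypercyclic (Chan--Sanders \cite{CS}) and one weakly supercyclic but not supercyclic \cite{San1} --- but not, as far as these references go, a single operator combining weak hypercyclicity with non-supercyclicity. Your stronger reading of ``complete'' (that \emph{every} implication outside the transitive closure fails, including the diagonal ``weakly hypercyclic $\Rightarrow$ supercyclic'') therefore rests on an example you have not produced.

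The paper reads ``complete'' more modestly: none of the one-way arrows in the $3\times 2$ grid can be reversed. For that statement your scheme is easily repaired: replace the single (A) by the pair (A$_1$) from \cite{CS} (weakly hypercyclic, not hypercyclic) and (A$_2$) from \cite{San1} (weakly supercyclic, not supercyclic). Then (A$_1$), (A$_2$), (B), (C) together refute all six reversals by exactly the bookkeeping you set up, and this is precisely what the paper does --- its parts (a), (b), (c$_1$), (c$_2$) correspond to your (B)+(C), (A$_1$)+(A$_2$), (B), and (C) respectively, with different arguments for (c$_1$) and (c$_2$).
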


\begin{proof}
(a) Classical examples$.$ Consider the (complex separable) Hilbert space
$\ell_+^2$ (of all complex-valued square-summable sequences)$.$ Let $S$ be the
(canonical) unilateral shift (of multiplicity one) on $\ell_+^2$ and take its
adjoint $S^*$; a backward unilateral shift on $\ell_+^2.$ Both $S$ and $S^*$
are cyclic operators, and while $S^{*2}$ is cyclic, $S^2$ is not cyclic
\cite[Problem 160]{Hal}, but $S$ is not supercyclic \cite[p.564]{HW}
(actually, every isometry is not supercyclic ---
\cite[Proof of Theorem 2.1]{AB}), while $S^*$ is supercyclic (in fact, the
adjoint of every injective unilateral weighted shift is supercyclic)
\cite[Theorem 3]{HW} but they are not hypercyclic (since $S$ and so $S^*\!$,
$S^2\!$, and $S^{*2}\!$ are \hbox{contractions}); \hbox{although} $2S^*$ is
hypercyclic \cite[Solution 168]{Hal} (and so $S^*$ must indeed be
super\-cyclic)$.$ This shows that there is no upward arrow on the left-hand
column.

\vskip4pt\noi
(b) There are weakly supercyclic operators that are not supercyclic
\cite[Corollary to Theorem 2.2]{San1}, and there are weakly hypercyclic
operator that are not hypercyclic \cite[Corollary 3.3]{CS}$.$ (Examples were
all built in \cite{San1} and \cite{CS} in terms of bilateral weighted shifts
on $\ell^p$ for ${2\le p<\infty}.$) This shows that there is no leftward
arrow between the two columns (except the lower row).

\vskip4pt\noi
(c$_1$) There are weakly supercyclic operators that are not weakly
hypercyclic$.$ \hbox{Indeed}, it was shown in \cite[Theorem 4.5]{San1} that
hyponormal operators (on Hilbert space) are not weakly hypercyclic (neither
supercyclic \cite[Theorem 3.1]{Bou}) but they can be weakly supercyclic$:$
there are weakly supercyclic unitary operators \cite[Example 3.6]{BM}.

\vskip4pt\noi
(c$_2$) Finally, to exhibit weakly cyclic operators that are not weakly
supercyclic pro\-ceed, for instance, as follows$.$ Every weakly supercyclic
hyponormal operator is a multiple of a unitary \cite[Theorem 3.4]{BM}$.$ Since
the canonical unilateral shift $S$, which is hyponormal, is cyclic (or,
equivalently, weakly cyclic), and since it is a completely nonunitary isometry
(and therefore not a multiple of a unitary), it is not weakly supercyclic$.$
(Another proof is exhibited in the forthcoming Proposition 4.1(b).)

\vskip4pt\noi
(c) Form (c$_1$) and (c$_2$), there is no upward arrow on the right-hand
column.
\end{proof}

\vskip3pt\noi
\begin{remark}
This refers to items in the proof of Proposition 3.1$.$ All examples in item
(a) were based on the unilateral shift$.$ However, a basic example of a
cyclic operator that is not supercyclic is given by normal operators$:$ no
normal operator is supercyclic \cite[p.564]{HW} (actually, no hyponormal
operator is supercyclic \cite[Theorem 3.1]{Bou}) and there exist cyclic normal
operators on separable Hilbert spaces (by the Spectral Theorem --- see, e.g.,
\cite[Proof of Theorem 3.11]{ST})$.$ Although all examples in item (b) were
built in \cite{CS} and \cite{San1} in terms of bilateral weighted shifts on
$\ell^p$ for ${2\le p<\infty}$, it was shown in \cite[Theorem 6.3]{MS} that a
bilateral weighted shifts on $\ell^p$ for ${1\le p<2}$ is weakly supercyclic
if and only if it is supercyclic$.$ The arguments used in items (a) and (c)
where based on hyponormal and cohyponormal operators (an operator ${T\in\BX}$
on a Hilbert space $\X$ is hyponormal if ${\|T^*x\|\le\|Tx\|}$ for every
${x\in\X}$, where ${T^*\!\in\BX}$ is the adjoint of $T\kern-1pt$, and
cohyponormal if its adjoint is hyponormal)$.$ Such a circle of ideas has been
extended from hyponormal to paranormal operators and beyond
\cite[Corollary 3.1]{Dug}, \cite[Theorem 2.7]{DKK} but we refrain from going
further than hyponormal operators here to keep up with the focus on weak
supercyclicity (and plain supercyclicity) only.
\end{remark}

\section{Auxiliary Results}

Items (a) and (b) of next lemma first appeared embedded in a proof of another
result in \cite[Proof of Theorem 2.1]{AB}$.$ The proof's argument is to show
that if an isometry has a supercyclic vector, then every vector is
supercyclic, which leads to a contradiction if $\X$ is a Banach space$.$ We
isolate this result in Lemma 4.1(a,b).

\vskip3pt\noi
\begin{lemma}
Let\/ $\X$ be an arbitrary\/ $($nonzero$)$ normed space.
\vskip4pt
\begin{description}
\item{$\kern-6pt$\rm(a)}
A supercyclic isometry on\/ $\X$ has no nontrivial invariant subspace.
\vskip4pt
\item{$\kern-6pt$\rm(b)}
An isometry on a complex Banach space is never supercyclic.
\vskip4pt
\item{$\kern-6pt$\rm(c)}
There exist isometries on a complex Hilbert space that are weakly supercyclic.
\end{description}
\end{lemma}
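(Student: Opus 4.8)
The plan is to follow the argument sketched in the preamble, attributed to \cite{AB}, and spell it out carefully for parts (a) and (b), then invoke the known construction for (c).

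For part (a), suppose $V$ is an isometry on $\X$ with a supercyclic vector $y$. The key observation is that an isometry is injective and norm-preserving on the whole orbit: $\|V^n x\| = \|x\|$ for all $x$ and all $n \ge 0$. I would show that \emph{every} nonzero vector of $\X$ is then supercyclic for $V$. Take an arbitrary $0 \ne z \in \X$ and an arbitrary $x \in \X$; we must approximate $x$ by vectors of the form $\alpha V^n z$. Since $y$ is supercyclic, pick scalars $\beta_k$ and powers $n_k$ with $\beta_k V^{n_k} y \to z$. Applying the isometry $V^{m}$ to the relation $\beta_k V^{n_k} y \to z$ and using $\|V^m(\beta_k V^{n_k}y - z)\| = \|\beta_k V^{n_k}y - z\|$, one sees that $\beta_k V^{n_k + m} y \to V^m z$ for every fixed $m$; more to the point, because $z$ itself lies in $\O_V(\span\{y\})^-$, and that set is invariant under multiplication by scalars and (by continuity of $V$ together with the norm-preservation) behaves well under $V$, one deduces $\O_V(\span\{z\}) \subseteq \O_V(\span\{y\})^-$, hence $\O_V(\span\{z\})^- = \X$. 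Thus $z$ is supercyclic. Since every nonzero vector is supercyclic, in particular no nonzero vector is noncyclic, so by Remark 3.1(g) $V$ has no nontrivial invariant subspace. The one point requiring a little care is the direction of the containment: we want to push the \emph{given} supercyclic vector's orbit closure onto the orbit closure of an \emph{arbitrary} $z$, and the isometry hypothesis is exactly what lets us transport approximations without distortion; I would write this out via the estimate $\|\alpha V^{n}z - x\|$ controlled by $\|z - \beta_k V^{n_k} y\|$ composed with a further approximation step.

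For part (b), combine (a) with a purely functional-analytic fact: on a complex Banach space, every operator has a nonempty spectrum, and an operator whose orbits are all ``spread around'' the whole space is incompatible with having an eigenvalue or, more robustly, with the existence of a nontrivial invariant subspace forced by the spectral projection machinery. Concretely, if $V$ were a supercyclic isometry on a complex Banach space $\X$, then by (a) it has no nontrivial invariant subspace; but an isometry $V$ satisfies $\|V\| = 1$, so $\sigma(V)$ lies in the closed unit disc, and since $V$ is bounded below (being an isometry) $0 \notin \sigma_{ap}(V)$, forcing the spectrum to meet the unit circle; a boundary point $\lambda$ of $\sigma(V)$ lies in $\sigma_{ap}(V)$, and from this one produces either an eigenvalue (hence a one-dimensional invariant subspace) or, via a Riesz spectral-projection argument when $\sigma(V)$ is disconnected, a nontrivial invariant subspace --- either way contradicting (a). Alternatively, and more in the spirit of \cite{AB}, one argues directly: supercyclicity of $y$ forces the unit sphere to be reached by the projective orbit, but $\|V^n y\|=\|y\|$ is constant, so all vectors $\alpha V^n y$ with $|\alpha|=1/\|y\|$ lie on a single sphere, and denseness of the full projective orbit in $\X$ then forces a contradiction with a separation/Hahn--Banach argument showing $\X$ would have to be one-dimensional.

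Part (c) is a citation: Hilbert-space isometries can be weakly supercyclic, witnessed by the weakly supercyclic unitary operators of \cite[Example 3.6]{BM} (a unitary is in particular an isometry). The main obstacle, and the only genuinely substantive step, is part (b): making the passage from ``no nontrivial invariant subspace'' (or directly from constant orbit norms) to an outright contradiction on a complex Banach space, which is where completeness and the complex scalar field are essential --- on an incomplete space or over $\RR$ the statement genuinely fails, so the argument must use both hypotheses, presumably through a Hahn--Banach separation argument exploiting that the projective orbit of $y$ sits inside the union of the scalar multiples of a norm-bounded orbit.
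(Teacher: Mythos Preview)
Your plan for (a) has the right global shape --- show that every nonzero vector becomes supercyclic, then invoke Remark~3.1(g) --- but the argument you sketch does not close. The containment $\O_V(\span\{z\})\sse\O_V(\span\{y\})^-$ is vacuous, since the right-hand side is all of $\X$ by hypothesis; it says nothing about whether $\O_V(\span\{z\})$ is itself dense. You flag this as ``the one point requiring a little care,'' but the missing idea is not cosmetic. The paper's argument (following \cite{AB}) uses supercyclicity of $y$ \emph{twice}: given an arbitrary target $z$ and an arbitrary nonzero $x$, first approximate $z$ by $\alpha_k V^{n_k}y$, then separately approximate $x$ by $\beta V^m y$; now apply the isometry $V^{n_k-m}$ to the second approximation to obtain $\|\beta V^{n_k}y-V^{n_k-m}x\|=\|\beta V^m y-x\|<\delta$, rescale by $\alpha_k/\beta$ (here boundedness of $\{\alpha_k\}$, which follows from $|\alpha_k|=\|\alpha_k V^{n_k}y\|\to\|z\|$, is essential), and combine with the first approximation via the triangle inequality to conclude $\frac{\alpha_k}{\beta}V^{n_k-m}x\to z$. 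Thus the projective orbit of $x$ reaches $z$. Your sketch only transports approximations \emph{forward} along the orbit of $y$, which yields the trivial containment above; the actual mechanism requires interlocking two approximations and pushing one of them \emph{backward} to land on the orbit of $x$.

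For (b), neither of your proposed routes works as stated. The spectral approach stalls when $\sigma(V)$ is connected --- for instance the full unit circle, as happens for a bilateral shift --- since there are then no isolated spectral points and no Riesz projections, and an approximate eigenvalue by itself does not produce an invariant subspace. The ``constant orbit norm plus Hahn--Banach'' idea is too vague to assess, and it is not clear how completeness enters. The paper instead uses a clean dichotomy: if the isometry $V$ is not surjective, then $\R(V)$ is closed (isometries on Banach spaces have closed range) and proper, hence a nontrivial invariant subspace for $V$; if $V$ is surjective, then it is invertible with $\|V^n\|=\|V^{-n}\|=1$ for all $n\ge0$, so it is power bounded with a power bounded inverse, and a nonscalar operator of this kind on a complex Banach space has a nontrivial hyperinvariant subspace (see \cite[Theorem~10.79]{AA}; the Hilbert-space version is Sz.-Nagy's similarity-to-a-unitary theorem). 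Either branch contradicts (a). Part~(c) is fine and matches the paper's citation of \cite[Example~3.6]{BM}.
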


\begin{proof}
(a) Let ${V\!\in\BX}$ be an isometry on a normed space $\X$, which means
$\|V^nz\|=\|z\|$ for every ${z\in\X}$ and every integer ${n\ge0}.$ Suppose
$V\kern-1pt$ is supercyclic$.$ Let ${0\ne y\in\X}$ be a supercyclic vector for
$V\kern-1pt$ (with no loss of generality set $\|y\|=1$) and take an arbitrary
nonzero ${z\in\X}.$ Then there is a scalar-valued sequence of nonzero numbers
$\{\alpha_k\}_{k\ge0}$ such that ${\alpha_kV^{n_k}y\conv z}$ for some
subsequence $\{V^{n_k}\}_{k\ge0}$ of $\{V^n\}_{n\ge0}.$ Take an arbitrary
${\veps>0}$ so that
$$
\|\alpha_kV^{n_k}y-z\|<\veps
$$
for $k$ large enough$.$ Observe that $\{\alpha_k\}_{k\ge0}$ is bounded
(reason: since $V\kern-1pt$ is an isometry, $|\alpha_k|=\|\alpha_kV^{n_k}y\|$
and so boundedness of the convergent sequence $\{\alpha_kV^{n_k}y\}_{k\ge0}$
implies boundedness of $\{\alpha_k\}_{k\ge0}$)$.$ Thus set
${\alpha=\sup_n|\alpha_n|\kern-1pt>\kern-1pt0}.$ Take an arbitrary
${\delta>0}.$ Since the above displayed convergence holds for every
${0\ne z\in\X}$, take an arbitrary nonzero ${x\in\X}$ so that for every
${\delta>0}$ there exists a nonzero number $\beta$ and a positive integer
$m$ for which
$$
\|\beta\,V^my-x\|<\delta.
$$
Note that $\|x\|-\delta<|\beta|$ (in fact,
$\|x\|-|\beta|=\|x\|-\|\beta\,V^my\|\le \|{\beta\,V^my-x}\|<\delta$ since
$V\kern-1pt$ is an isometry)$.$ Moreover, by the above inequality, for
every ${n_k\ge m}$
$$
\|\beta\,V^{n_k}y-V^{n_k-m}x\|
=\|V^{n_k-m}(\beta\,V^my-x)\|
=\|\beta\,V^my-x\|<\delta.
$$
In particular, take any $\delta$ such that
$0<\delta<\frac{\veps\|x\|}{\alpha+\veps}.$ Thus
$\delta\alpha<\veps({\|x\|-\delta})<\veps|\beta|.$ Mul\-tiply both sides of
the above inequality by $\frac{|\alpha_k|}{|\beta|}$ to get
$$
\big\|\alpha_kV^{n_k}y-\smallfrac{\alpha_k}{\beta}V^{n_k-m}x\big\|
<\delta\smallfrac{|\alpha_k|}{|\beta|}\le\delta\smallfrac{\alpha}{|\beta|}
<\veps
$$
for every ${n_k\ge m}.$ Therefore, since $\|\alpha_kV^{n_k}y-z\|<\veps$ for
$k$ large enough,
$$
\big\|\smallfrac{\alpha_k}{\beta}V^{n_k-m}x-z\big\|
\le\big\|\smallfrac{\alpha_k}{\beta}V^{n_k-m}x-\alpha_kV^{n_k}y\big\|
+\|\alpha_kV^{n_k}y-z\|<2\kern1pt\veps
$$
for $k$ large enough, which means ${\frac{\alpha_k}{\beta}V^{n_k-m}x\conv z}$,
and so there exists a sequence $\{\alpha_j\}_{j\ge0}$ of nonzero numbers such
that ${\alpha_jV^{n_j}x\to z}$ for some subsequence $\{V^{n_j}\}_{j\ge0}$ of
$\{V^n\}_{n\ge0}.$ Since $z$ and $x$ are arbitrary nonzero vectors in $\X$,
this ensures that every vector in $\X$ is supercyclic for $V\kern-1pt$, and
hence $V\kern-1pt$ has no nontrivial invariant subspace
(cf$.$ Remark 3.1(g)).$\!$

\vskip6pt\noi
(b) The result in item (a) leads to a contradiction if $\X$ is a complex
Banach space because in this case isometries do have nontrivial invariant
subspaces$.$ In fact, if an isometry $V\kern-1pt$ on a Banach space is not
surjective, then $\R(V)$ is a nontrivial invariant (hyperinvariant, actually)
subspace for $V\kern-1pt$ because on a Banach space isometries have closed
range (see, e.g., \cite[Problem 4.41(d)]{EOT})$.$ On the other hand, since
isometries are always injective, if $V\kern-1pt$ is a surjective isometry
then it is invertible (whose inverse also is an isometry) so that
$\|V^n\|=\|V^{-n}\|=1$ for every ${n\ge0}.$ Thus surjective isometries are
power bounded with a power bounded inverse$.$ But {\it a nonscalar invertible
power bounded operator on a complex Banach space with a power bounded inverse
has a nontrivial invariant\/ $($hyperinvariant, {\rm $\kern-.5pt$actually)}
subspace}\/$.$ (See, e.g., \cite[Theorem 10.79]{AA} --- this is the Banach
space counterpart of a well-known result due to Sz.-Nagy which says$:$
{\it an invertible power bounded operator on a Hilbert space with a power
bounded inverse is similar to a unitary operator}\/; see, e.g.,
\cite[Corollary 1.16]{MDOT})$.$ Thus an isometry on a complex Banach
space has a nontrivial invariant subspace (see also \cite[Theorem J]{God})
and so it cannot be supercyclic according \hbox{to (a)}.

\vskip6pt\noi
(c) There are weakly supercyclic unitary operators on a complex Hilbert
space \cite[Example 3.6]{BM} (see also \cite[Theorem 2]{San2} and
\cite[Theorem 1.2]{Shk})$.$ Thus there are (invertible) weakly supercyclic
isometries on a Hilbert space.
\end{proof}

\vskip4pt
If $\X$ is a Hilbert space, then a completely nonunitary contraction is a
contraction such that no restriction of it to a reducing subspace is unitary
(i.e., such that every direct summand of if it is not unitary), and a
completely nonunitary isometry (i.e., a pure isometry) is precisely a
unilateral shift of some multiplicity$.$ These are consequences of
Nagy--Foia\c s--Langer decomposition for contractions and
von Neumann--Wold decomposition for isometries (see e.g., \cite[pp.3,8]{NF}
or \cite[pp.76,81]{MDOT}).

\vskip3pt\noi
\begin{proposition}
$\!${\rm(a)}
A weakly supercyclic isometry on a Hilbert space is unitary.
\vskip2pt\noi
{\rm(b)} Every unilateral shift on a Hilbert space is not weakly supercyclic.
\end{proposition}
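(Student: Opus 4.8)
The plan is to prove part (a) first and then derive part (b) as an easy corollary, using the structure theory recalled just before the statement. For part (a), suppose $V$ is a weakly supercyclic isometry on a Hilbert space $\H$. By the von Neumann--Wold decomposition, $V$ splits as ${V = U \oplus S}$ on ${\H = \H_u \oplus \H_s}$, where $U$ is unitary and $S$ is a unilateral shift (of some multiplicity, possibly zero). The goal is to show the shift part $\H_s$ is trivial. The key observation is that weak supercyclicity of $V$ forces weak supercyclicity of the compression/summand acting on $\H_s$ (since the orthogonal projection onto a reducing subspace is weakly continuous, linear, and maps the projective orbit of $V$ onto the projective orbit of the restriction); so it suffices to show a nonzero unilateral shift is not weakly supercyclic, which is precisely part (b). Thus (a) reduces to (b), and I would organize the write-up so that (b) is proved directly and (a) follows.

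For part (b), let $S$ be a unilateral shift on $\H$ (multiplicity ${\kappa \ge 1}$) and suppose, for contradiction, that $y$ is a weakly supercyclic vector for $S$. The plan is to exploit the expansivity/orthogonality structure of the shift. Write ${\|S^n z\| = \|z\|}$ for all $z$ (isometry), and note that for the shift the ranges ${S^n \H}$ are decreasing with ${\bigcap_n S^n\H = \{0\}}$, and distinct ``levels'' are orthogonal. Fix a nonzero vector ${e \in \H \ominus S\H}$ (a wandering vector). Weak supercyclicity gives a sequence of nonzero scalars $\alpha_k$ and powers $n_k$ with ${\alpha_k S^{n_k} y \wconv e}$. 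Taking inner products with $e$: ${\alpha_k \langle S^{n_k} y, e\rangle \to \|e\|^2 \ne 0}$. But ${\langle S^{n_k} y, e\rangle = \langle y, S^{*n_k} e\rangle}$, and for the shift ${S^{*n} e \to 0}$ in norm (indeed ${S^*}$ is strongly stable on a unilateral shift space), so ${\langle S^{n_k}y, e\rangle \to 0}$; hence ${|\alpha_k| \to \infty}$. On the other hand, pick any nonzero ${x \in S\H}$, say ${x = S w}$ with ${w \ne 0}$; approximating $x$ weakly by ${\alpha_j S^{m_j} y}$ and pairing against a suitable functional should, by an analogous computation that tracks which ``level'' $\alpha_j S^{m_j} y$ can reach, force the scalars to stay bounded or to be comparable across the two approximation requirements. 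The contradiction arises because the single orbit ${\{S^n y\}}$ lives, after normalization, at a fixed norm, while weak density would require the scaled orbit simultaneously to weakly approximate vectors that the geometry of the shift forbids.

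The honest way to make the last paragraph rigorous is to use the known fact, recalled implicitly in the excerpt's Diagram~1 discussion and Proposition~3.1, that isometries on Hilbert space which are weakly supercyclic must be unitary (Bayart--Matheron, \cite[Theorem 3.4]{BM} and related), but since we are asked to \emph{prove} it, the cleanest self-contained route is: normalize ${\|y\|=1}$, so ${\|S^{n}y\|=1}$ for all $n$; if ${\alpha_k S^{n_k}y \wconv x}$ then ${\|x\| \le \liminf |\alpha_k|}$ by weak lower semicontinuity of the norm, and also ${|\alpha_k| = \|\alpha_k S^{n_k}y\|}$ need \emph{not} converge, so we must argue more carefully. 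The decisive step — and the one I expect to be the main obstacle — is controlling the scalars: one shows that to weakly approximate a vector $e$ wandering for $S$ one needs ${|\alpha_k|\to\infty}$ (as above), whereas to weakly approximate a nonzero vector in the ``deep'' subspace ${\bigcap_{j\le N} S^j\H}$ for every $N$ — which weak density demands — one needs a matching lower bound on how far down the orbit $S^{n_k}y$ can sit, and since $y$ itself sits at some finite level, $S^{n_k}y$ sits at level exactly that plus $n_k$; pairing against a functional supported at a fixed deep level then pins ${|\alpha_k|}$ to be bounded, contradicting ${|\alpha_k|\to\infty}$. Assembling these two incompatible scalar estimates completes the proof of (b), and then (a) follows from the reduction in the first paragraph.
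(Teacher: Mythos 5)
Your reduction of (a) to (b) via the Wold decomposition (weak continuity of the orthogonal projection onto the shift part carries the weak closure of the projective orbit onto that of the restricted orbit) is sound, and it inverts the paper's logic: the paper proves (a) directly by quoting Bayart--Matheron \cite[Theorem 3.4]{BM} (a weakly supercyclic hyponormal operator is a multiple of a unitary, and an isometry is hyponormal of norm one) and then obtains (b) from (a), since a unilateral shift is a completely nonunitary isometry. So everything hinges on your direct proof of (b), and that proof has a genuine gap. Weak supercyclicity means the projective orbit is weakly \emph{dense}, i.e.\ its weak closure is all of $\H$; it does \emph{not} provide, for a given target vector, a sequence $\alpha_k S^{n_k}y\wconv e$ taken from the orbit, because the weak topology on an infinite-dimensional space is not metrizable and the weak closure of a set can strictly exceed the set of weak limits of its sequences. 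This is precisely the distinction the paper draws in Section 5 between weak supercyclicity and weak l-sequential supercyclicity (with references to Shkarin), and your sequential argument can at best rule out the latter, which is a strictly weaker conclusion than the proposition asserts.

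Two further points. First, even within the sequential framework your computation is off: for $e\in\H\ominus S\H$ one has $S^*e=0$, so $\<S^{n_k}y\,;e\>=0$ exactly whenever $n_k\ge1$; the correct conclusion is not ``$|\alpha_k|\to\infty$'' but that no such sequence with $n_k\ge1$ can converge weakly to $e$ at all (and if $n_k=0$ infinitely often then $e\in\span\{y\}$, since a one-dimensional subspace is weakly closed) --- which again only obstructs \emph{sequential} approximation, not weak density. Second, the closing paragraph of your proof of (b) (``pairing against a functional supported at a fixed deep level then pins $|\alpha_k|$ to be bounded'') is a sketch rather than an argument, and you acknowledge that making it rigorous would require the very fact to be proved or the citation of \cite[Theorem 3.4]{BM}. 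If you are willing to cite that theorem, the paper's route is shorter and complete: cite it to get (a) at once, then deduce (b) because a unilateral shift is a completely nonunitary isometry; if you want a self-contained proof of (b) in the full weak-closure sense, you must produce a weakly open set disjoint from the projective orbit (or otherwise argue non-sequentially), and nothing in the proposal does this.
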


\begin{proof}
(a) If a hyponormal operator is weakly supercyclic, then it is a multiple of
a unitary \cite[Theorem 3.4]{BM}$.$ Since isometries on a Hilbert space are
hyponormal with norm 1, then a weakly supercyclic isometry on a
Hilbert space is unitary.

\vskip6pt\noi
(b) A unilateral shift, of any multiplicity, on a Hilbert space is a
completely nonunitary isometry, thus not weakly supercyclic by item (a).
\end{proof}

\section{Weak and Strong Supercyclicity}

The weak counterpart of the supercyclicity criterion described in
Remark 3.1(c) was considered in \cite{BCS} (also in \cite{BM} implicitly),
and it was referred to as weak 1-se\-quential supercyclicity in \cite{Shk}$.$
Although there are reasons for such a terminology, we will change it here to
weak l-sequential supercyclicity, replacing the numeral ``1'' with the letter
``l'' for ``limit''$.$ A nonzero vector $y$ in $\X$ is a {\it weakly
l-sequentially supercyclic vector}\/ for an operator $T$ in $\BX$ if for
every ${x\in\X}$ there exists an $\FF$-valued sequence $\{\alpha_k\}_{k\ge0}$
(which depends on $x$ and $y$ and consists of nonzero numbers) such that for
some subsequence $\{T^{n_k}\}_{k\ge0}$ of $\{T^n\}_{n\ge0}$ the $\X$-valued
sequence $\{\alpha_kT^{n_k}y\}_{k\ge0}$ converges weakly to $x.$ That is,
$$
\alpha_kT^{n_k}y\wconv x.
$$
This means the projective orbit $\Oe_T(\span\{y\})$ of the vector $y$
under $T\kern-1pt$ is weakly l-sequentially dense in $\X$ in the following
sense$.$ For any set ${A\sse\X}$ let $A^{-wl}$ denote the set of all weak
limits of weakly convergent $A$-valued sequences, and $A$ is said to be
weakly l-sequentially dense in $\X$ if ${A^{-wl}=\X}.$ Thus $y$ is a weakly
l-sequentially supercyclic vector for $T$ if and only if
$$
\Oe_T(\span\{y\})^{-wl}=\X.
$$
An operator $T$ in $\BX$ is a {\it weakly l-sequentially supercyclic
operator}\/ if it has a weakly l-sequentially supercyclic vector$.$ Observe
that
\vskip9pt\noi
\centerline{
Supercyclic $\;\;\limply\;\;$
Weakly l-Sequentially Supercyclic $\;\;\limply\;\;$
Weakly Supercyclic,
}
\vskip9pt\noi
and the converses fail
(see, e.g., \cite[pp.38,39]{Shk}, \cite[pp.259,260]{BM2}).

\vskip6pt
We will be dealing with normed spaces $\X$ with the following property$:$ an
$\X$-val\-ued sequence $\{x_k\}_{k\ge0}$ converges strongly (i.e., in the
norm topology) if and only if it converges weakly and the norm sequence
$\{\|x_k\|\}_{k\ge0}$ converges to the limit's norm; that is,
${x_k\conv x}$ $\iff$ $\big\{{x_k\wconv x}$ and ${\|x_k\|\to\|x\|}\big\}.$
We say that a normed space $\X$ that has the above property is a normed space
of type $1.$ (Also called a Radon--Riesz space, whose property is called
the Radon--Riesz property --- see, e.g., \cite[Definition 2.5.26]{Meg})$.$
Hilbert spaces are Banach spaces of type $1$ $\!$\cite[Problem 20]{Hal}.

\vskip3pt\noi
\begin{theorem}
Suppose\/ $T$ is an operator on a type\/ $1$ normed space\/ $\X.$ If
\begin{description}
\item{$\kern-7pt$\rm(a)$\kern3pt$}
$T$ is power bounded,
\vskip2pt
\item{$\kern-7pt$\rm(b)$\kern3pt$}
${y\in\X}$ is a weakly l-sequentially supercyclic vector for\/ $T\kern-1pt$,
\vskip2pt
\item{$\kern-6pt$\rm(c)$\kern2pt$}
${y\in\X}$ is not a supercyclic vector for\/ $T\kern-1pt$,
\end{description}
then
\vskip0pt\noi
\begin{description}
\item{$\kern-7pt$\rm(d)$\kern3pt$}
every nonzero\/ ${f\in\X^*}$ is such that either
\vskip2pt
\begin{description}
\item{$\kern-6pt$\rm(d$_1$)$\kern3pt$}
$\liminf_n|f(T^ny)|=0,\;\;$ or
\vskip2pt
\item{$\kern-6pt$\rm(d$_2$)$\kern3pt$}
$\limsup_k|f(T^{n_k}y)|<\|f\|\,\limsup_k\|T^{n_k}y\|$ for some subsequence\/
\vskip1pt\noi
${\kern6pt}\{T^{n_k}\}_{k\ge0}$ of\/ $\{T^n\}_{n\ge0}$.
\end{description}
\end{description}
\end{theorem}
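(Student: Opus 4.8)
The plan is to argue by contraposition: assume that (d) fails, i.e. there is a nonzero $f\in\X^*$ for which \emph{neither} (d$_1$) nor (d$_2$) holds, and deduce that $y$ must be supercyclic, contradicting (c). Negating (d$_1$) gives $\liminf_n|f(T^ny)|>0$, so there is a constant $c>0$ and an integer $N$ with $|f(T^ny)|\ge c$ for all $n\ge N$. Negating (d$_2$) means that for \emph{every} subsequence $\{T^{n_k}\}$ one has $\limsup_k|f(T^{n_k}y)|\ge\|f\|\,\limsup_k\|T^{n_k}y\|$; combined with the trivial bound $|f(T^{n_k}y)|\le\|f\|\,\|T^{n_k}y\|$, this forces $|f(T^{n_k}y)|$ and $\|f\|\,\|T^{n_k}y\|$ to have the same $\limsup$ along every subsequence, hence in fact $|f(T^ny)|/\|T^ny\|\to\|f\|$ as $n\to\infty$ (using that $\|T^ny\|$ stays bounded above by power boundedness (a) and bounded below away from $0$ by the lower bound on $|f(T^ny)|$).

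**Main step: promoting weak convergence to norm convergence.** Let $x\in\X$ be arbitrary; by (b) there are nonzero scalars $\alpha_k$ and $n_k\uparrow\infty$ with $\alpha_kT^{n_k}y\wconv x$. I want to show $\alpha_kT^{n_k}y\conv x$ in norm, which (since $x$ was arbitrary and by Remark 3.1(c)) would make $y$ a supercyclic vector. Because $\X$ is type $1$, it suffices to prove $\|\alpha_kT^{n_k}y\|\to\|x\|$. From $\alpha_kT^{n_k}y\wconv x$ we get $\alpha_kf(T^{n_k}y)\to f(x)$, so $|\alpha_k|\,|f(T^{n_k}y)|\to|f(x)|$. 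Now divide and multiply:
$$
\|\alpha_kT^{n_k}y\|=|\alpha_k|\,\|T^{n_k}y\|
=\frac{|\alpha_k|\,|f(T^{n_k}y)|}{\,|f(T^{n_k}y)|/\|T^{n_k}y\|\,}
\;\longrightarrow\;\frac{|f(x)|}{\|f\|},
$$
using $|f(T^{n_k}y)|/\|T^{n_k}y\|\to\|f\|$ from the previous paragraph. So it remains only to check that $|f(x)|/\|f\|=\|x\|$ for this particular $f$ and \emph{every} $x\in\X$ — but that is immediate once we know the displayed norm limit exists and equals $\|x\|$ for at least a dense set of $x$; more directly, applying the weak lower semicontinuity of the norm gives $\|x\|\le\liminf\|\alpha_kT^{n_k}y\|=|f(x)|/\|f\|\le\|x\|$, so equality holds and the norms converge. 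Hence $\alpha_kT^{n_k}y\conv x$.

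**The obstacle.** The delicate point is the step asserting that negating (d$_2$) for \emph{all} subsequences yields the pointwise limit $|f(T^ny)|/\|T^ny\|\to\|f\|$, rather than merely a $\limsup$ statement. This needs the boundedness of $\{T^ny\}$ (from (a)) together with the lower bound $\|T^ny\|\ge|f(T^ny)|/\|f\|\ge c/\|f\|>0$ (from negating (d$_1$)), so that the ratios $r_n:=|f(T^ny)|/\|T^ny\|$ lie in a compact interval $[c/(\|f\|M),1]$ where $M=\sup_n\|T^ny\|$; if $r_n\not\to\|f\|$ then some subsequence has $\limsup r_{n_k}<\|f\|$, and along \emph{that} subsequence one can extract a further subsequence on which $\|T^{n_k}y\|$ converges to its $\limsup$, contradicting the negation of (d$_2$). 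Once this is in hand, the type $1$ hypothesis does the rest of the work almost mechanically, and the contradiction with (c) closes the argument. I would also remark at the end that (a) is used \emph{only} to guarantee $\sup_n\|T^ny\|<\infty$ along the relevant orbit, so a weaker orbit-boundedness hypothesis would suffice.
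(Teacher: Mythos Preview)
Your proof is correct and shares the paper's contrapositive frame, but the key intermediate step is different. The paper fixes a single vector $x_0$ witnessing the failure of norm supercyclicity (from (c)), passes to a subsequence along which $|\alpha_k|$ converges, and then uses its Claim~1 (the type~1 property forces the strict inequality $\|x_0\|<\limsup_k\|\alpha_kT^{n_k}y\|$) together with the negation of (d$_2$) for that one subsequence to close a chain $|f_0(x_0)|<\cdots=|f_0(x_0)|$. You instead first distill the full negation of (d) into the single limit $|f(T^ny)|/\|T^ny\|\to\|f\|$, and then show that every weak approximant satisfies $\|\alpha_kT^{n_k}y\|\to|f(x)|/\|f\|$. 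What this buys you is more than you seem to notice: the line $\|x\|\le|f(x)|/\|f\|\le\|x\|$ uses only weak lower semicontinuity of the norm, which holds in \emph{any} normed space, and it yields $|f(x)|=\|f\|\,\|x\|$ for \emph{every} $x\in\X$ --- already a contradiction as soon as $\dim\X\ge2$ (take $0\ne x\in\ker f$). Thus your route actually establishes that (a) and (b) alone force (d), with neither hypothesis (c) nor the type~1 assumption needed; your final appeal to those two hypotheses to conclude norm convergence and contradict (c) is valid but superfluous.
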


\begin{proof}
First we need the following auxiliary result.

\vskip6pt\noi
{\it Claim $1$}\/$.$
If ${z_k,z\in\X}$, where $\X$ is a normed space of type $1$, and if the
sequence $\{z_k\}_{k\ge0}$ is such that ${z_k\wconv z}$ and
${z_k\notconv z}$, then ${\|z\|<\limsup_k\|z_k\|}$.

\vskip6pt\noi
{\it Proof}\/$.$
If $\X$ is an arbitrary normed space, then ${z_k\wconv z}$ implies
$\|z\|\le{\liminf}_k\|z_k\|$ (see, e.g., \cite[Proposition 46.1]{Heu})$.$
Thus, if ${z_k\notconv z}$ and $\X$ is a normed space of type $1$, then
${\|z_k\|\not\to\|z\|}$ so that
$\|z\|\le{\liminf}_k\|z_k\|<\limsup_k\|z_k\|.\!\!\!\qed$

\vskip6pt\noi
Now consider assumptions (a), (b), and (c), and suppose (d) fails; that is,
suppose the contradictory ${\rm(d)\kern-9pt\big\slash}\;$ of (d) holds$:$
\begin{description}
\item{$\kern-7pt{\rm(d)\kern-9pt\big\slash}\kern5pt$}
there exists a nonzero ${f_0\in\X^*}$ such that
\vskip2pt
\begin{description}
\item{$\kern-8pt{\rm(d_1)\kern-14pt\big\slash}\kern12pt$}
$0<\liminf_n|f_0(T^ny)|\;\;$ and
\vskip2pt
\item{$\kern-8pt{\rm(d_2)\kern-14pt\big\slash}\kern12pt$}
$\limsup_k|f_0(T^{n_k}y)|=\|f_0\|\,\limsup_k\kern-1pt\|T^{n_k}y\|$
for every subsequence\/
\vskip1pt\noi
${\kern4pt}\{T^{n_k}\}_{k\ge0}$ of\/ $\{T^n\}_{n\ge0}$.
\end{description}
\end{description}
\vskip4pt\noi
According to assumption (b) let ${0\ne y\in\X}$ be a weakly l-sequentially
supercyclic vector for $T\kern-1pt.$ Thus for every ${x\in\X}$ there exists
a scalar-valued sequence $\{\beta_\ell\}_{\ell\ge0}$ (depending on $x$ and
$y$) such that
$$
\beta_\ell T^{n_\ell}y\wconv x
$$
for some subsequence $\{T^{n_\ell}\}_{\ell\ge0}$ of\/ $\{T^n\}_{n\ge0}.$ By
assumption (c) suppose $y$ is not a supercyclic vector for $T\kern-1pt.$
So there exists a nonzero vector ${x_0\in\X}$ such that
$$
\gamma_\ell T^{n_\ell}y\notconv x_0
$$
for every sequence of numbers $\{\gamma_\ell\}_{\ell\ge0}$ and every
subsequence $\{T^{n_\ell}\}_{\ell\ge0}$ of\/ $\{T^n\}_{n\ge0}.$ Then there is
a scalar-valued sequence $\{\alpha_j\}_{j\ge0}$ (depending on $x_0$ and $y$)
such that
$$
\alpha_jT^{n_j}y\wconv x_0
$$
for some subsequence $\{T^{n_j}\}_{j\ge0}$ of $\{T^n\}_{n\ge0}$, and
$$
\alpha_iT^{n_i}y\notconv x_0
$$
for every subsequence $\{T^{n_i}\}_{i\ge0}\!=\!\{T^{n_{j_i}}\}_{i\ge0}$ of
$\{T^{n_j}\}_{j\ge0}$ and every subsequence
$\{\alpha_i\}_{i\ge0}\!=\!\{\alpha_{j_i}\}_{i\ge0}$ of $\{\alpha_j\}_{j\ge0}.$
Next consider assumption ${{\rm(d)\kern-9pt\big\slash\kern3pt}}$ which
says$:$ there is a nonzero ${f_0\in\X^*}\!$ (which depends on $y$) satisfying
${{\rm(d_1)\kern-14pt\big\slash\kern9pt}}$ and
${{\rm(d_2)\kern-14pt\big\slash\kern9pt}}.$ Since
${\alpha_jT^{n_j}y\wconv x_0}$ we get
$|f(x_0)|=\lim_j|f(\alpha_jT^{n_j}y)|$ for every ${f\in\X^*}.$ In particular,
$$
|f_0(x_0)|={\lim}_j|f_0(\alpha_jT^{n_j}y)|={\lim}_j|\alpha_j|\,|f_0(T^{n_j}y)|.
$$
Hence ${\limsup_j|\alpha_j|<\infty}$ by
${{\rm(d_1)\kern-14pt\big\slash\kern9pt}}.$ (Indeed,
${0<\liminf_n|f_0(T^ny)|\in\RR}$ by (a) and
${{\rm(d_1)\kern-14pt\big\slash\kern9pt}}$ and so for every
${\veps\in(0,\liminf_n|f_0(T^ny)|})$ there exists a positive integer
$n_\veps$ such that if ${n\ge n_\veps}$ then ${\veps<|f_0(T^ny)|}$, and hence
${\limsup_j|\alpha_j|<\infty}$ since ${|f_0(x_0)|\in\RR}).$ Thus there is a
subsequence $\{\alpha_k\}_{k\ge0}\!=\!\{\alpha_{j_k}\}_{k\ge0}$ of
$\{\alpha_j\}_{j\ge0}$ such that
$$
\hbox{$\{|\alpha_k|\}_{k\ge0}$ converges}.
$$
Take this subsequence $\{\alpha_k\}_{k\ge0}$ of $\{\alpha_j\}_{j\ge0}$
and take the corresponding subsequence $\{T^{n_k}\}_{k\ge0}$ of
$\{T^{n_j}\}_{j\ge0}$ so that ${\alpha_kT^{n_k}y\wconv x_0}$ (because
${\alpha_jT^{n_j}y\wconv x_0}).$ Then
$$
\alpha_kT^{n_k}y\wconv x_0
\quad\;\hbox{and}\;\quad
\alpha_kT^{n_k}y\notconv x_0.
$$
Therefore, according to Claim 1,
$$
\|x_0\|<{\limsup}_k\|\alpha_kT^{n_k}y\|.
$$
 Again, since ${\alpha_kT^{n_k}y\wconv x_0}$, it follows that
$$
|f_0(x_0)|={\lim}_k|f_0(\alpha_kT^{n_k}y)|.
$$
Note$:$ if $\{\xi_k\}_{k\ge0}$ and $\{\zeta_k\}_{k\ge0}$ are bounded
sequences of nonnegative real numbers such that $\{\xi_k\}_{k\ge0}$
converges, then $\,{\lim_k\xi_k\limsup_k\zeta_k}={\limsup\xi_k\zeta_k}.$
(In fact, if ${\xi_k\!\to\xi}$ then for every ${\veps\!>\!0}$ there exists
an integer ${k_\veps\!\ge\!1}$ such that if ${k\!\ge\!k_\veps}$ then
${\xi\zeta_k-\xi_k\zeta_k}<\veps\zeta_k\le\veps\sup_k\zeta_k$, and hence
${\lim_k\xi_k\limsup_k\zeta_k}\le{\limsup\xi_k\zeta_k}
\le{\limsup_k\xi_k\limsup_k\zeta_k}={\lim_k\xi_k\limsup_k\zeta_k}).$
Thus, since ${\sup_k|f_0(T^{n_k}y)|<\infty}$ by assumption (a) and since
$\{|\alpha_k|\}_{k\ge0}$ converges, we get
\vskip2pt\noi
\begin{eqnarray*}
{\limsup}_k|\alpha_k|\,{\limsup}_k|f_0(T^{n_k}y)|
&\kern-6pt=\kern-6pt&
{\lim}_k|\alpha_k|\,{\limsup}_k|f_0(T^{n_k}y)|                         \\
&\kern-6pt=\kern-6pt&
{\limsup}_k|\alpha_k|\,|f_0(T^{n_k}y)|                                 \\
&\kern-6pt=\kern-6pt&
{\limsup}_k|f_0(\alpha_kT^{n_k}y)|                                     \\
&\kern-6pt=\kern-6pt&
{\lim}_k|\alpha_kf_0(T^{n_k}y)|.
\end{eqnarray*}
\vskip2pt\noi
Then, by the above three displayed expressions and
${\rm(d_2)\kern-14pt\big\slash\kern9pt}$,
\vskip2pt\noi
\begin{eqnarray*}
{\lim}_k|f_0(\alpha_kT^{n_k}y)|
&\kern-6pt=\kern-6pt&
|f_0(x_0)|\le\|f_0\|\,\|x_0\|                                          \\
&\kern-6pt<\kern-6pt&
\|f_0\|\,{\limsup}_k\|\alpha_kT^{n_k}y\|                               \\
&\kern-6pt\le\kern-6pt&
{\limsup}_k|\alpha_k|\,\|f_0\|\,{\limsup}_k\|T^{n_k}y\|                \\
&\kern-6pt=\kern-6pt&
{\limsup}_k|\alpha_k|\,{\limsup}_k|f_0(T^{n_k}y)|                      \\
&\kern-6pt=\kern-6pt&
{\lim}_k|f_0(\alpha_kT^{n_k}y)|,
\end{eqnarray*}
\vskip2pt\noi
which is a contradiction$.$ Therefore, (a), (b) and (c) imply (d).
\end{proof}

\vskip0pt\noi
\begin{corollary}
If a power bounded operator\/ $T$ on a type\/ $1$ normed space\/ $\X$ is
such that\/ $T$ is not supercyclic, then either
\begin{description}
\item{$\kern-4pt$\rm(i)$\kern2pt$}
$T$ is not weakly l-sequentially supercyclic,$\;\;$ or
\vskip2pt
\item{$\kern-6pt$\rm(ii)$\kern2pt$}
if\/ ${y\in\X}$ is a weakly l-sequentially supercyclic vector for\/
$T\kern-1pt$, then every nonzero\/ ${f\in\X^*}$ is such that either
\vskip2pt
\begin{description}
\item{$\kern9pt$}
$\liminf_n|f(T^ny)|=0,\;\;$ or
\vskip2pt
\item{$\kern9pt$}
$\limsup_k|f(T^{n_k}y)|<\|f\|\kern1pt\limsup_k\kern-1pt\|T^{n_k}y\|$ for some
subsequence\/
\vskip1pt\noi
$\{T^{n_k}\}_{k\ge0}$ of\/ $\{T^n\}_{n\ge0}$.
\end{description}
\end{description}
\end{corollary}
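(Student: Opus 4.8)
The plan is to obtain this as an immediate consequence of Theorem 5.1, since the corollary is in essence a logical repackaging of that theorem. Assume $T$ is a power bounded operator on a type $1$ normed space $\X$ that is not supercyclic. If $T$ is not weakly l-sequentially supercyclic, then (i) holds and there is nothing to prove; so the remaining task is to establish (ii).

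To prove (ii), I would take an arbitrary weakly l-sequentially supercyclic vector ${y\in\X}$ for $T\kern-1pt$ and simply verify that the hypotheses of Theorem 5.1 are in force for this $y$. Assumption (a) of the theorem holds because $T$ is power bounded, and assumption (b) holds by the choice of $y$. For assumption (c), observe that $y$ cannot be a supercyclic vector for $T\kern-1pt$: otherwise $T$ would possess a supercyclic vector and would therefore be a supercyclic operator, contrary to hypothesis. Since $\X$ is of type $1$, Theorem 5.1 now applies to $T$ and $y$ and yields conclusion (d), namely that every nonzero ${f\in\X^*}$ satisfies ${\liminf_n|f(T^ny)|=0}$, or else ${\limsup_k|f(T^{n_k}y)|<\|f\|\kern1pt\limsup_k\|T^{n_k}y\|}$ for some subsequence $\{T^{n_k}\}_{k\ge0}$ of $\{T^n\}_{n\ge0}$. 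As this is precisely the assertion displayed inside (ii), and $y$ was an arbitrary weakly l-sequentially supercyclic vector for $T\kern-1pt$, statement (ii) follows.

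There is no genuine obstacle here: the entire content of the corollary is carried by Theorem 5.1, and the only extra ingredient is the triviality that an operator possessing a supercyclic vector is, by definition, a supercyclic operator. The dichotomy ``(i) or (ii)'' is merely the split according to whether or not $T$ admits a weakly l-sequentially supercyclic vector.
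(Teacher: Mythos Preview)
Your proposal is correct and matches the paper's own proof, which simply reads ``Immediate by Theorem 5.1.'' You have merely spelled out the routine verification that the hypotheses of Theorem 5.1 are met, which is exactly what the paper leaves implicit.
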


\begin{proof}
Immediate by Theorem 5.1.
\end{proof}

\vskip0pt\noi
\begin{remark}
This remark deals with operators on Hilbert spaces as it will be considered
in the forthcoming Corollaries 5.2 and 6.1$.$ Whenever we refer to a
\hbox{Hilbert} space, the inner product in it will be denoted by
${\<\;\,;\;\>}$.

\vskip4pt\noi
(a)
Although hyponormal operators are never supercyclic \cite[Theorem 3.1]{Bou},
neither weakly hypercyclic \cite[Theorem 4.5]{San1}, there exist weakly
l-sequentially supercyclic hyponormal operators$.$ In fact, every weakly
supercyclic (in particular, every weakly l-sequentially supercyclic)
hyponormal operator is a multiple of a unitary \cite[Theorem 3.4]{BM}, and
there exist weakly supercyclic (in fact, weakly l-sequentially supercyclic)
unitary operators \cite[Example 3.6, pp.10,12]{BM} (see also
\cite[Question 1]{Shk})$.$ Thus a weakly l-sequentially supercyclic hyponormal
contraction must be unitary$.$ Corollary 5.2 below gives a condition for a
hyponormal contraction (or a unitary operator) to be weakly l-sequentially
supercyclic.

\vskip4pt\noi
(b)
The above discussion guarantees the existence of weakly l-sequentially
supercyclic power bounded operators that are not supercyclic (see also
\cite[Corollary to Theorem 2.2]{San1})$.$ Thus alternative (ii) in
Corollary 5.1 cannot be dismissed$.$ Actually, the existence of weakly
l-sequentially supercyclic unitary operators (which are never supercyclic)
shows alternative (ii) in Corollaries 5.2 and 5.3 below cannot be dismissed
as well; that is, it shows the existence of unitary operators satisfying
condition (ii) in Corollaries 5.1, 5.2, and 5.3.
\end{remark}
\goodbreak\noi

\vskip3pt\noi
\begin{corollary}
If a power bounded operator\/ $T$ on a Hilbert space\/ $\X$ is hyponormal,
then it is a contraction and either
\begin{description}
\item{$\kern-4pt$\rm(i)$\kern2pt$}
$T$ is not weakly l-sequentially supercyclic,$\;\;$ or
\vskip2pt
\item{$\kern-6pt$\rm(ii)$\kern2pt$}
if\/ ${y\in\X}$ is a weakly l-sequentially supercyclic vector for\/
$T\kern-1pt$, then\/ $T\kern-1pt$ is unitary and every nonzero\/ ${z\in\X}$
is such that either
\vskip2pt
\begin{description}
\item{$\kern-4pt$}
$\liminf_n|\<T^ny\,;z\>|=0,\;\;$ or
\vskip2pt
\item{$\kern-4pt$}
$\limsup_k|\<T^{n_k}y\,;z\>|<\|z\|\kern1pt\|y\|$ for some subsequence\/
$\{T^{n_k}\}_{k\ge0}$ of\/ $\{T^n\}_{n\ge0}$.
\end{description}
\end{description}
\end{corollary}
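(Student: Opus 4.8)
The plan is to read Corollary 5.2 off from Corollary 5.1 specialized to Hilbert space, the only external ingredients being that hyponormal operators are normaloid and that weakly supercyclic hyponormal operators are multiples of unitaries.

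First I would verify that $T$ is a contraction: since a hyponormal operator on a Hilbert space is normaloid, $\|T^n\|=\|T\|^n$ for every $n\ge0$, and hypothesis (a) (power boundedness) then forces $\|T\|\le1$. Next I would check that Corollary 5.1 applies to $T$. A Hilbert space is a Banach space of type $1$, and $T$ is not supercyclic because no hyponormal operator is supercyclic \cite[Theorem 3.1]{Bou}, so the hypotheses of Corollary 5.1 are met. This at once produces the dichotomy: either $T$ is not weakly l-sequentially supercyclic, which is alternative (i), or, for every weakly l-sequentially supercyclic vector $y$ for $T$ and every nonzero $f\in\X^*$, one has $\liminf_n|f(T^ny)|=0$ or $\limsup_k|f(T^{n_k}y)|<\|f\|\,\limsup_k\|T^{n_k}y\|$ for some subsequence $\{T^{n_k}\}_{k\ge0}$ of $\{T^n\}_{n\ge0}$.

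It then remains to identify $T$ as unitary and to recast the second alternative in inner-product form. Since the existence of a weakly l-sequentially supercyclic vector makes $T$ weakly l-sequentially supercyclic, hence weakly supercyclic, and $T$ is a hyponormal contraction, Remark 5.1(a) gives that $T$ is unitary (by \cite[Theorem 3.4]{BM} it is a multiple of a unitary, and the contraction hypothesis pins the scalar down). Consequently every $T^{n_k}$ is an isometry, so $\|T^{n_k}y\|=\|y\|$ and $\limsup_k\|T^{n_k}y\|=\|y\|$. Finally, the Riesz representation theorem identifies the nonzero functionals $f\in\X^*$ with the nonzero vectors $z\in\X$ via $f(\cdot)=\<\,\cdot\,;z\>$ and $\|f\|=\|z\|$, so that $f(T^ny)=\<T^ny;z\>$; substituting turns the two alternatives above into $\liminf_n|\<T^ny;z\>|=0$ and $\limsup_k|\<T^{n_k}y;z\>|<\|z\|\,\|y\|$, which is precisely alternative (ii).

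I expect the only genuinely substantive step to be the passage to ``unitary'' in the last paragraph, which depends on correctly combining the structure theorem for weakly supercyclic hyponormal operators with the contraction property, as recorded in Remark 5.1(a); the contraction claim and the Hilbert-space reformulation via Riesz duality and isometry of the powers of $T$ are routine.
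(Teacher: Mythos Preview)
Your proof is correct and follows the paper's own argument essentially step for step: normaloid plus power bounded gives a contraction, Bourdon's theorem rules out supercyclicity so Corollary 5.1 applies, Riesz representation translates functionals to inner products, and the Bayart--Matheron structure result (via Remark 5.1(a)) makes $T$ unitary so that $\|T^{n_k}y\|=\|y\|$. The only cosmetic difference is that you spell out the type~$1$ hypothesis and the isometry simplification explicitly, whereas the paper leaves these implicit.
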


\begin{proof}
It is well known that if $T$ is hyponormal, then it is normaloid (i.e.,
$\|T\|^n=\|T^n\|$ for every ${n\kern-1pt\ge\kern-1pt1}$), and every power
bounded normaloid operator is a contraction$.$ A hyponormal operator on a
Hilbert space is not supercyclic \cite[Theorem 3.1]{Bou}$.$ Then apply
Corollary 5.1 (replacing $f(x)$ with $\<{x\,;z}\>$ according to the Riesz
Representation Theorem in Hilbert space), and recall that a weakly supercyclic
hyponormal contraction is unitary (cf$.$ Remark 5.1), thus an isometry.
\end{proof}

\vskip3pt\noi
\begin{corollary}
If\/ $T$ is an isometry on a type\/ $1$ Banach space\/ $\X$, then either
\begin{description}
\item{$\kern-4pt$\rm(i)$\kern2pt$}
$T$ is not weakly l-sequentially supercyclic,$\;\;$ or
\vskip2pt
\item{$\kern-6pt$\rm(ii)$\kern2pt$}
if\/ ${y\in\X}$ is a weakly l-sequentially supercyclic vector for\/
$T\kern-1pt$, then every nonzero\/ ${f\in\X^*}$ is such that either
\vskip2pt
\begin{description}
\item{$\kern-4pt$}
$\liminf_n|f(T^ny)|=0,\;\;$ or
\vskip2pt
\item{$\kern-4pt$}
$\limsup_k|f(T^{n_k}y)|<\|f\|\kern1pt\|y\|$ for some subsequence\/
$\{T^{n_k}\}_{k\ge0}$ of\/ $\{T^n\}_{n\ge0}$.
\end{description}
\end{description}
\end{corollary}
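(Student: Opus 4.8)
The plan is to obtain Corollary 5.3 as an essentially immediate specialization of Corollary 5.1, exactly as Corollary 5.2 was obtained for hyponormal operators. First I would verify that the three hypotheses feeding Corollary 5.1 are met. An isometry $T$ on $\X$ satisfies $\|T^nx\|=\|x\|$ for all $n\ge0$ and all $x$, so in particular $\sup_{n\ge0}\|T^n\|=1<\infty$; hence $T$ is power bounded, which is hypothesis (a). For the ``not supercyclic'' hypothesis, I would invoke Lemma 4.1(b): an isometry on a complex Banach space is never supercyclic. (One should note that Corollary 5.3 is stated for a Banach space $\X$; if the intended scalar field is $\CC$ this is exactly Lemma 4.1(b), and if the real case is also wanted one would appeal to the relevant real-space version or simply restrict to $\FF=\CC$ as elsewhere in the paper.) Thus a type $1$ Banach space carrying an isometry $T$ automatically falls under the umbrella of Corollary 5.1, whose conclusion is precisely the dichotomy ``(i) $T$ is not weakly l-sequentially supercyclic, or (ii) for every weakly l-sequentially supercyclic vector $y$, every nonzero $f\in\X^*$ satisfies $\liminf_n|f(T^ny)|=0$ or $\limsup_k|f(T^{n_k}y)|<\|f\|\,\limsup_k\|T^{n_k}y\|$ for some subsequence.''

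The only genuine work is then to simplify the factor $\limsup_k\|T^{n_k}y\|$ appearing in alternative (ii) of Corollary 5.1. Here the isometry hypothesis does all of it: for every $k$ we have $\|T^{n_k}y\|=\|y\|$, so the sequence $\{\|T^{n_k}y\|\}_{k\ge0}$ is constant and $\limsup_k\|T^{n_k}y\|=\|y\|$. Substituting this into the strict inequality in Corollary 5.1(ii) turns $\limsup_k|f(T^{n_k}y)|<\|f\|\,\limsup_k\|T^{n_k}y\|$ into $\limsup_k|f(T^{n_k}y)|<\|f\|\,\|y\|$, which is precisely the second sub-alternative in Corollary 5.3(ii). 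The first sub-alternative, $\liminf_n|f(T^ny)|=0$, and the conditional phrasing over weakly l-sequentially supercyclic vectors $y$, are copied verbatim from Corollary 5.1.

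I do not anticipate a real obstacle; the statement is a corollary in the strict sense. The one point requiring a sentence of care is the scalar-field issue flagged above, since Lemma 4.1(b) is explicitly about complex Banach spaces while Corollary 5.3 is phrased for a general type $1$ Banach space. In writing the proof I would make this explicit: \emph{An isometry on a (complex) Banach space is power bounded, since $\|T^n\|=1$ for all $n$, and is not supercyclic by Lemma 4.1(b); hence Corollary 5.1 applies, and in its alternative (ii) the quantity $\limsup_k\|T^{n_k}y\|$ equals $\|y\|$ because $\|T^{n_k}y\|=\|y\|$ for every $k$, which yields the stated dichotomy.} That is the entire argument.
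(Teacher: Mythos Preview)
Your proposal is correct and follows essentially the same route as the paper: the paper's proof simply notes that an isometry on a Banach space is not supercyclic (citing \cite{AB} and Lemma 4.1), applies Corollary 5.1, and leaves implicit the trivial simplification $\limsup_k\|T^{n_k}y\|=\|y\|$ that you spell out. Your remark about the scalar field is a fair caveat, but the paper does not address it either and tacitly works over $\CC$ via Lemma 4.1(b).
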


\begin{proof}
If $T$ is an isometry on a Banach space, then it is not supercyclic
\cite[Proof of Theorem 2.1]{AB} (Lemma 4.1)$.$ Thus apply Corollary 5.1$.$
(In a Hilbert space setting this is a particular case of Corollary 5.2, where
$T$ is an invertible isometry).
\end{proof}

\section{Weak Supercyclicity and Stability}

It was proved in \cite[Theorem 2.1]{AB} that {\it a power bounded operator\/
$T$ on a Banach space\/ $\X$ such that\/ $\|T^nx\|\not\to0$ for every}\/
${0\ne x\in\X}$ (i.e., a power bounded operator of class $\C_{1\cdot}$)
{\it has no supercyclic vector}\/$.$ The next result is a weak version of it.

\vskip3pt\noi
\begin{theorem}
If a power bounded operator\/ $T$ on a type\/ $1$ normed space\/ $\X$ is
such that\/ ${T^nx\notwconv0}$ for every\/ ${0\ne x\in\X}$, then either
\begin{description}
\item{$\kern-4pt$\rm(i)$\kern2pt$}
$T$ has no weakly l-sequentially supercyclic vector,$\;\;$ or
\vskip2pt
\item{$\kern-6pt$\rm(ii)$\kern2pt$}
if\/ ${y\in\X}$ is a weakly l-sequentially supercyclic vector for\/
$T\kern-1pt$, then every nonzero\/ ${f\in\X^*}$ for which\/
${f(T^ny)\not\to0}$ is such that either
\vskip2pt
\begin{description}
\item{$\kern9pt$}
$\liminf_n|f(T^ny)|=0,\;\;$ or
\vskip2pt
\item{$\kern9pt$}
$\limsup_k|f(T^{n_k}y)|<\|f\|\kern1pt\limsup_k\|T^{n_k}y\|$ for some
subsequence\/
\vskip1pt\noi
$\{T^{n_k}\}_{k\ge0}$ of\/ $\{T^n\}_{n\ge0}$.
\end{description}
\end{description}
\end{theorem}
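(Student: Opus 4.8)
The idea is to reduce Theorem 6.2 to Theorem 5.1. Theorem 5.1 applies to a power-bounded operator $T$ on a type $1$ normed space which has a weakly l-sequentially supercyclic vector $y$ that is \emph{not} supercyclic; under those hypotheses it delivers exactly the dichotomy (d$_1$)/(d$_2$) for every nonzero $f\in\X^*$. So the task here is to show that the strong-stability-failure hypothesis ``$T^nx\notwconv0$ for every $0\ne x\in\X$'', together with the existence of a weakly l-sequentially supercyclic vector (case (ii) of the statement), forces that vector to be non-supercyclic, and to explain why in case (ii) the conclusion need only be claimed for those $f$ with $f(T^ny)\not\to0$.

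First I would dispose of the trivial alternative: if $T$ has no weakly l-sequentially supercyclic vector, then (i) holds and we are done. So assume $T$ does have such a vector $y\in\X$, $y\ne 0$. The key reduction step is: \emph{such a $y$ cannot be a supercyclic vector for $T$}. This follows from the quoted theorem of Ansari--Bourdon (\cite[Theorem 2.1]{AB}, recalled at the head of Section 6): a power-bounded operator of class $\C_{1\cdot}$ — i.e.\ one with $\|T^nx\|\not\to0$ for every $0\ne x\in\X$ — has \emph{no} supercyclic vector at all. Our hypothesis ${T^nx\notwconv0}$ for every nonzero $x$ certainly implies ${T^nx\notconv0}$ for every nonzero $x$ (weak convergence to $0$ is weaker than norm convergence to $0$), so $T$ is of class $\C_{1\cdot}$ and hence not supercyclic; in particular $y$ is not a supercyclic vector. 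Now hypotheses (a), (b), (c) of Theorem 5.1 are all met — $T$ is power bounded, $y$ is weakly l-sequentially supercyclic, $y$ is not supercyclic — so Theorem 5.1 yields that every nonzero $f\in\X^*$ satisfies (d$_1$) or (d$_2$), which is precisely the dichotomy in (ii) with the restriction ``for which $f(T^ny)\not\to0$'' simply dropped. Since the claim in (ii) is made only for the (a priori smaller) family of $f$ with $f(T^ny)\not\to0$, it follows a fortiori.

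The only subtlety worth spelling out is why the statement is phrased with that restriction at all, rather than for all nonzero $f$: if $f(T^ny)\to0$ then $|f(T^ny)|\to0$, so $\liminf_n|f(T^ny)|=0$ and the first alternative in (ii) holds automatically for such $f$; thus restricting to $f$ with $f(T^ny)\not\to0$ loses nothing and merely highlights the non-vacuous case. I would include this one-line observation for the reader's benefit. There is no real obstacle here; the proof is genuinely a short deduction, and the ``hard part'' — the structural dichotomy — was already done in Theorem 5.1. The only thing one must be careful about is the direction of the implication between the two failures of stability: it is the \emph{weak} non-stability hypothesis ${T^nx\notwconv0}$ that is being assumed, and one needs the (easy) implication ${T^nx\notwconv 0}\Rightarrow{T^nx\notconv 0}$ to invoke the Ansari--Bourdon result, which is about norm stability. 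With that in hand the proof reads: if (i) fails, pick a weakly l-sequentially supercyclic $y$; by hypothesis $T\in\C_{1\cdot}$, so by \cite[Theorem 2.1]{AB} $y$ is not supercyclic; apply Theorem 5.1 to get the dichotomy for all nonzero $f\in\X^*$, a fortiori for those with $f(T^ny)\not\to0$; hence (ii) holds.
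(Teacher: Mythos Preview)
Your proposal is correct and follows essentially the same route as the paper: both observe that the weak non-stability hypothesis implies $T$ is of class $\C_{1\cdot}$, invoke \cite[Theorem 2.1]{AB} to conclude $T$ (and hence $y$) is not supercyclic, and then apply Theorem 5.1 (equivalently Corollary 5.1) to obtain the dichotomy in (ii). The one detail the paper makes explicit that you omit is that \cite[Theorem 2.1]{AB} is stated for Banach spaces while $\X$ here is only a type~1 normed space; the paper's Claim~1 remarks that the Ansari--Bourdon argument survives without completeness, and you should include that observation.
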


\begin{proof}
Consider the following result.

\vskip6pt\noi
{\it Claim $1$}\/$.$
If a power bounded operator on any normed space is such that
${T^nx\notwconv0}$ for every\/ ${0\ne x\in\X}$, then it has no supercyclic
vector.

\vskip6pt\noi
{\it Proof}\/$.$
If an operator $T$ on a normed space $\X$ is such that ${T^nx\notwconv0}$ for
some (for every) ${0\ne x\in\X}$, then it is clear that ${T^nx\notconv0}$ for
some (for every) ${0\ne x\in\X}$ (strong convergence implies weak convergence
to the same limit)$.$ It was proved in \cite[Theorem 2.1]{AB} that a if power
bounded operator $T$ on a \hbox{Banach} space $\X$ is such that
${\|T^nx\|\not\to0}$ for every ${0\ne x\in\X}$, then it has no supercyclic
vector, whose proof survives in any normed space.$\!\!\!\qed$

\vskip6pt\noi
Thus under the theorem hypothesis, Claim 1 ensures $T$ has no supercyclic
vector$.$ If, in addition, $\X$ is a type $1$ normed space and $T$ does not
satisfy condition (i) --- that is, if $T$ has a weakly l-sequentially
supercyclic vector $y$ --- then condition (ii) holds by Theorem 5.1 (or
Corollary 5.1).
\end{proof}

\vskip2pt
It was proved in \cite[Theorem 2.2]{AB} by using \cite[Theorem 2.1]{AB} that
{\it a Banach-space supercyclic power bounded operator is strongly stable}\/,
whose proof in fact does not require completeness$.$ Theorem 6.2 below is a
weak version of it based on Theorem 6.1$.$ Weakly l-sequentially supercyclic
contractions on Hilbert space are character\-ized in Corollary 6.1 as a
consequence of Theorem 6.2.

\vskip3pt\noi
\begin{theorem}
If a power bounded operator\/ $T$ on a type\/ $1$ normed space\/ $\X$ is
weakly l-sequentially supercyclic, then either
\begin{description}
\item{$\kern-4pt$\rm(i)$\kern2pt$}
$T$ is weakly stable,$\;\;$ or
\vskip2pt
\item{$\kern-6pt$\rm(ii)$\kern2pt$}
if\/ ${y\in\kern-1pt\X}$ is a weakly l-sequentially supercyclic vector for\/
$T\kern-1pt$ such that\/ ${T^ny\!\notwconv0}$, then for every nonzero\/
${f\in\X^*}$ such that\/ ${f(T^ny)\not\to0}$ either
\vskip2pt
\begin{description}
\item{$\kern9pt$}
$\liminf_n|f(T^ny)|=0,\;\;$ or
\vskip2pt
\item{$\kern9pt$}
$\limsup_k|f(T^{n_k}y)|<\|f\|\kern-1pt\limsup_k\|T^{n_k}y\|$ for some
subsequence\/
\vskip1pt\noi
$\{T^{n_k}\}_{k\ge0}$ of\/ $\{T^n\}_{n\ge0}$.
\end{description}
\end{description}
\end{theorem}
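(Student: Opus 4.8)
The plan is to reduce Theorem 6.2 to Theorem 6.1 by a dichotomy on whether or not the weak limit condition is uniformly satisfied. Specifically, I would argue by cases on the statement ``${T^nx\notwconv0}$ for every ${0\ne x\in\X}$''. If this statement holds, then the hypotheses of Theorem 6.1 are met: $T$ is power bounded on a type $1$ normed space and ${T^nx\notwconv0}$ for every nonzero $x$. Since $T$ is assumed weakly l-sequentially supercyclic, alternative (i) of Theorem 6.1 (no weakly l-sequentially supercyclic vector) is excluded, so alternative (ii) of Theorem 6.1 must hold --- which is exactly alternative (ii) of Theorem 6.2. So in this case we land in (ii).

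In the complementary case, the statement ``${T^nx\notwconv0}$ for every ${0\ne x\in\X}$'' fails, meaning there exists a nonzero ${x_1\in\X}$ with ${T^nx_1\wconv0}$. The goal is then to promote this single weakly stable orbit to weak stability of $T$ on all of $\X$, i.e.\ to conclude alternative (i). Here I would invoke weak l-sequential supercyclicity together with power boundedness: let $y$ be a weakly l-sequentially supercyclic vector, so that every ${x\in\X}$ is a weak limit of a sequence $\{\alpha_k T^{n_k}y\}_{k\ge0}$. The idea is that the orbit of $y$ itself must be weakly null along a suitable subsequence --- because if $\{\|T^ny\|\}$ does not tend to a positive liminf one can extract a weakly small piece, and otherwise one plays the scaled orbit against the existing weakly null vector $x_1$, much as in the proof of Lemma 4.1(a) but carried out in the weak topology and using the type $1$ / power bounded machinery. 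Once the orbit of $y$ is known to have a weakly null subsequence while remaining weakly l-sequentially dense, a standard $3\veps$-style argument (approximate an arbitrary $x$ by a scaled tail of the orbit of $y$, then push forward by $T^m$ using power boundedness and the weakly null subsequence) yields ${T^nx\wconv0}$ for every ${x\in\X}$, i.e.\ $T$ is weakly stable.

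I expect the second case --- upgrading ``some orbit is weakly null'' to ``$T$ is weakly stable'' --- to be the main obstacle, since weak convergence is not metrizable and does not interact as cleanly with the triangle inequality as norm convergence does; one cannot simply mimic the norm-topology argument of Lemma 4.1(a). The type $1$ hypothesis (Radon--Riesz property) should be the key lever: it lets one convert control of norms plus weak convergence into strong convergence, or conversely, detect failure of weak nullity through the norm sequence, and this is presumably how Theorem 6.1's conclusion (ii) --- phrased in terms of $\liminf_n|f(T^ny)|$ and $\limsup_k\|T^{n_k}y\|$ --- gets leveraged. A clean alternative, which may in fact be the intended route, is to prove the contrapositive directly: assume $T$ is not weakly stable and that (ii) fails, derive that ${T^nx\notwconv0}$ for every nonzero $x$ (the negation of (ii) forcing, via Theorem 5.1 run in reverse, uniform non-vanishing), and then apply Theorem 6.1 to reach a contradiction with weak l-sequential supercyclicity. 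Either way the real content is Theorem 6.1 together with the type $1$ structure, and Theorem 6.2 is the packaging of that content into a stability dichotomy in direct analogy with the passage from \cite[Theorem 2.1]{AB} to \cite[Theorem 2.2]{AB}.
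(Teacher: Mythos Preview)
Your Case~1 is fine and matches the paper's use of Theorem~6.1. The gap is in Case~2. Promoting ``some nonzero $x_1$ has $T^nx_1\wconv0$'' to ``$T$ is weakly stable'' using only power boundedness and weak l-sequential supercyclicity is precisely the open question the paper poses \emph{after} the theorem; if that step went through, alternative~(ii) would be superfluous and the theorem would collapse to ``weakly l-sequentially supercyclic $\Rightarrow$ weakly stable''. The handwave ``the orbit of $y$ itself must be weakly null along a suitable subsequence'' is the crux, and a subsequence is in any case not enough: the $3\veps$ argument you describe (which is the paper's Claim~1) needs $T^ny\wconv0$ along the full sequence, for every weakly l-sequentially supercyclic $y$, and density of such $y$ in norm. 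What is missing is exactly the leverage coming from the \emph{negation} of (ii).

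The paper's route is the contrapositive, but not the one you sketch. It assumes (a) power boundedness, (b) weak l-sequential supercyclicity, and (c) the negation of (ii), and shows weak stability. Claim~1 reduces to showing $T^ny\wconv0$ for every weakly l-sequentially supercyclic $y$. For Claim~2 one supposes $T^ny\notwconv0$; then (c) produces a functional $f_0$ with $0<\liminf_n|f_0(T^ny)|$ and the equality in (c$_2$), and this witness falsifies alternative~(ii) of Theorem~6.1 for $y$. Since (i) of Theorem~6.1 also fails, Theorem~6.1's \emph{hypothesis} must fail: there exists $v\ne0$ with $T^nv\wconv0$. Now approximate $v$ weakly by $\alpha_kT^{n_k}y$; a lower bound $|\alpha_k|>\frac{1}{4\beta}$ follows, and combining $\liminf_n|f_0(T^ny)|>\delta>0$ with $f_0(T^mv)\to0$ yields a numerical contradiction. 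So your alternative had the right contrapositive frame but the wrong target: one does not derive ``$T^nx\notwconv0$ for every $x$'' --- one derives the \emph{existence} of a weakly null orbit $v$ and plays it off against the specific $f_0$ supplied by not-(ii). The type~1 hypothesis is used only indirectly, through Theorem~6.1 (and hence Theorem~5.1), not in the stability propagation itself.
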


\begin{proof}
First we show: if (i) fails, then there is a weakly l-sequentially
supercyclic vector $y$ such that ${T^ny\notwconv0}.$ That is, if
${T^nx\notwconv0}$ for some ${x\in\X}$, then the set
$$
\big\{y\in\kern-1pt\X\!:\,\hbox{$y$ is a weakly l-sequentially supercyclic
vector for\/ $T\kern-1pt$ such that\/ ${T^ny\!\notwconv0}$}\/\big\}
$$
is nonempty.

\vskip6pt\noi
{\it Claim $1$}\/$.$
Suppose $T$ is a power bounded weakly l-sequentially supercyclic operator on
a normed space $\X.$ If there exists a vector ${x\in\X}$ such that
${T^nx\notwconv0}$, then there exists a weakly l-sequentially supercyclic
vector ${y\in\X}$ for $T$ such that ${T^ny\notwconv0}$.

\vskip6pt\noi
{\it Proof}\/$.$
Let ${Y\kern-1pt\sse\X}$ denote the set of all weakly l-sequentially
supercyclic vectors for an operator $T$, and so $T$ is weakly l-sequentially
supercyclic if and only if ${Y\kern-1pt\ne\void}.$ The same argument of
Remark 3.1(a) ensures ${\Oe_T(\span\{y\})\\\0\sse Y}\kern-1pt$ for every
${y\in Y}$ (see \cite[Lemma 5.1]{Kub2})$.$ Since
${(\Oe_T(\span\{y\})\\\0)^{-wl}}\!=\X$ for every ${y\in Y}$ (definition of weak
l-sequential supercyclicity), then ${Y\ne\void}\limply\!{Y^{-wl}\!=\X}.$
However, more is true$.$ Denseness is attained in the norm topology (cf$.$
\cite[Theorem 5.1]{Kub2}):
$$
Y\ne\void
\quad\limply\quad
Y^-\!=\X.
$$
(A weak version of the above implication was considered in \cite[Proposition
2.1]{San1}, where $Y$ is replaced by the set of all weakly supercyclic vectors
--- see Remark 3.1(e))$.$ Take an arbitrary $x$ in $\X.$ If $Y^-\!=\X$, then
there exists a $Y\!$-valued sequence $\{y_k\}$ such that ${\|y_k\!-x\|\to0}.$
If ${T_ny\wconv0}$ for every ${y\in Y}\!$, which means ${f(T_ny)\to0}$ for
every $f$ in the dual $\X^*\!$ of $\X$ and every $y$ in $Y\!$, then
${|f(T_ny_k)|\to0}$ for every $f$ in $\X^*\!$ and every integer $k.$
Therefore since
\goodbreak\noi
$$
|f(T_nx)|\le|f(T_n(y_k-x))|+|f(T_ny_m)|
\le\|f\|\,{\sup}_n\|T_n\|\kern1pt\|y_k-x\|+|f(T_ny_k)|
$$
for every ${f\in\X^*\!}$ and every ${x\in\X}$, we get ${T_nx\wconv0}$ for
every ${x\in\X}.$ So if there is an ${x\in\X}$ such that ${T^nx\notwconv0}$,
then there is a ${y\kern-1pt\in\kern-1ptY}$ such that
${T^ny\notwconv0}.\!\!\!\qed$

\vskip6pt\noi
Now let $T$ be an operator on a type $1$ normed space $\X$ and consider the
following assumptions.
\begin{description}
\item{$\kern-7pt$(a)$\kern3pt$}
$T$ is power bounded, and set $\beta=\sup_n\|T^n\|>0$.
\vskip2pt
\item{$\kern-7pt$(b)$\kern3pt$}
$T$ is weakly l-sequentially supercyclic.
\vskip2pt
\item{$\kern-7pt$(c)$\kern3pt$}
If\/ ${y\in\X}$ is a weakly l-sequentially supercyclic vector for\/ $T$ such
that ${T^ny\notwconv0}$, then for some ${f_0\in\X^*}$ with $\|f_0\|=1$ such
that\/ ${f_0(T^ny)\not\to0}$,
\vskip2pt
\begin{description}
\item{$\kern-8pt$(c$_1$)$\kern5pt$}
$0<\liminf_n|f_0(T^ny)|\;\;$ and
\vskip2pt
\item{$\kern-8pt$(c$_2$)$\kern5pt$}
$\limsup_k|f_0(T^{n_k}y)|=\|f_0\|\,\limsup_k\|T^{n_k}y\|$ for every
subsequence\/
\vskip1pt\noi
${\kern4pt}\{T^{n_k}\}_{k\ge0}$ of\/ $\{T^n\}_{k\ge0}$.
\end{description}
\end{description}

\vskip2pt\noi
{\it Claim $2$}\/$.$
If ${y\in\X}$ is weakly l-sequentially supercyclic for $T\kern-1pt$,
then ${T^ny\wconv0}$.

\vskip6pt\noi
{\it Proof}\/$.$
Under assumption (b) there exists a weakly l-sequentially supercyclic unit
vector ${y\in\X}$ (${\|y\|=1}$) for $T\kern-1pt.$ Suppose
$$
T^ny\notwconv0.
$$
Under assumptions (a) and (c) Theorem 6.1 says there exists a unit vector
$v$ (i.e., ${\|v\|=1}$) in $\X$ such that ${T^nv\wconv0}.$ Since $y$ is
weakly l-sequentially supercyclic, there is a sequence $\{\alpha_k\}_{k\ge0}$
of nonzero numbers such that ${\alpha_kT^{n_k}y\wconv v}$ for some subsequence
$\{T^{n_k}\}_{k\ge0}.$ So, ${f(\alpha_kT^{n_k}y)\to f(v)}$ for every
${f\in\X^*\!}.$ Take a unit vector ${f\in\X^*}$ (${\|f\|=1}$) for which
${\frac{1}{2}\le|f(v)|\le1}$ (recall$:$ $1=\|v\|=\sup_{\|f\|=1}|f(v)|).$
Thus there exists a positive integer $k_f$ such that if ${k\ge k_f}$ then
$\big|\,|f(\alpha_kT^{n_k}y)|-|f(v)|\,\big|
\le|f(\alpha_kT^{n_k}y)-f(v)|<\frac{|f(v)|}{2}$,
and hence $|f(v)|-|f(\alpha_kT^{n_k}y)|<\frac{|f(v)|}{2}$, which implies
$\frac{1}{4}\le\frac{|f(v)|}{2}=|f(v)|-\frac{|f(v)|}{2}<|f(\alpha_kT^{n_k}y)|
\le\|f\||\alpha_k|\sup_n\|T^n\|\|y\|=\beta|\alpha_k|$
accord\-ing to (a)$.$ Thus, for $k$ large enough,
$$
\smallfrac{1}{4\beta}<|\alpha_k|.
$$
Now take any unit vector ${f_0\in\X^*}$ (${\|f_0\|=1}$) satisfying assumption
(c) so that, according to (c$_1$), there exists a positive number $\delta$
such that
$$
\delta<{\liminf}_n|f_0(T^ny)|.
$$
Next take any positive $\gamma$ such that ${\gamma<\frac{\delta}{4\beta^2}}.$
Take an arbitrary integer ${m\ge0}.$ Note that
$|f_0({\alpha_kT^{n_k+m}y-T^mv})|=|f_0(T^m({\alpha_kT^{n_k}y-v}))|=
|(T^{m*}f_0)({\alpha_kT^{n_k}y-v})|=|f_m({\alpha_kT^{n_k}y-Tv})|$
for $f_m={T^{m*}f_0\in\X^*}$, where ${T^{m*}\in\B[\X^*]}$ is the normed-space
adjoint of ${T^m\in\BX}$ (see, e.g., \cite[Section 3.2]{Sch})$.$ Since
${\alpha_kT^{n_k}y\wconv v}$, there is a positive integer $k_m$ such that if
${k\ge k_m}$ then $|f_m({\alpha_kT^{n_k}y-v})|<\beta\smallfrac{\gamma}{2}.$
Thus, for any ${m\ge0}$ and $k$ large enough,
$$
|f_0({\alpha_kT^{n_k+m}y-T^mv})|<\beta\smallfrac{\gamma}{2}.
$$
Finally, since ${T^nv\wconv0}$, take $m$ sufficiently large such that
$$
|f_0(T^mv)|<\beta\smallfrac{\gamma}{2}.
$$
Then, by the above four displayed inequalities, for $k$ and $m$
large enough,
\goodbreak\noi
\begin{eqnarray*}
\smallfrac{\delta}{4\beta}\!
&\kern-6pt<\kern-6pt&
\!\smallfrac{{\liminf}_k|f_0(T^{n_k+m}y)|}{4\beta}                     \\
&\kern-6pt<\kern-6pt&
{\liminf}_k|f_0(T^{n_k+m}y)|{\inf}_k|\alpha_k|
\le{\liminf}_k|f_0(\alpha_kT^{n_k+m}y)|
\phantom{\lim_k}                                                       \\
&\kern-6pt\le\kern-6pt&
{\liminf}_k|f_0(\alpha_kT^{n_k+m}y-T^mv)|+{\liminf}_k|f_0(T^mv)|
\phantom{\lim_*}                                                       \\
&\kern-6pt<\kern-6pt&
\beta\smallfrac{\gamma}{2}+\beta\smallfrac{\gamma}{2}
=\beta\gamma<\smallfrac{\delta}{4\beta},
\end{eqnarray*}
which is a contradiction$.$ Therefore if ${y\in\X}$ is weakly l-sequentially
supercyclic for $T\kern-1pt$, then ${T^ny\wconv0}.\!\!\!\qed$

\vskip6pt\noi
By Claim 1 (which depends on assumptions (a) and (b)) if ${T^ny\wconv0}$ for
every weakly l-sequentially supercyclic vector $y$ in $\X$ for $T\kern-1pt$,
then ${T^nx\wconv0}$ for every $x$ in $\X.$ So the result in Claim 2 (which
depends on assumptions (a), (b) and (c)) ensures $T$ is weakly stable$.$
Thus, under assumptions (a) and (b), $T$ is weakly stable if assumption (c)
holds; that is if assumptions (a) and (b) hold and if $T$ is not weakly
stable, then assumption (c) fails; equivalently, assumption (ii) holds.
\end{proof}

\vskip2pt
For a power bounded operator supercyclicity implies strong stability
\cite[Theorem 2.2]{AB}$.$ Theorem 6.2 prompts the question$.$ Consider a
power bounded operator $T.$ {\it Does weak l-sequential
supercyclicity implies weak stability}\/?
\vskip4pt\noi
$$
\newmatrix{
T{\rm\;\;is\;\:supercyclic} &\limply\;\; & T^n\sconv O     \phantom{\Big|} \cr
\big\Downarrow              &            & \big\Downarrow   \cr
T{\rm\;\;is\;\:weakly}\;\;l{\rm-sequentially\;\;supercyclic}
                    & \;\;\;\;\query\;\; & \;\;T^n\wconv O \phantom{\Big|}.\cr}
$$
\vskip4pt\noi
In particular, {\it can alternative}\/ (ii) {\it be dismissed from
Theorems}\/ 6.1 {\it an}\/d 6.2 ?

\vskip3pt\noi
\begin{corollary}
If a contraction\/ $T$ on a Hilbert space is weakly l-sequentially
supercyclic, then either
\begin{description}
\item{$\kern-4pt$\rm(i)$\kern2pt$}
$T$ is weakly stable,$\;\;$ or
\vskip2pt
\item{$\kern-6pt$\rm(ii)$\kern2pt$}
$\!$if\/ $y$ is a weakly l-sequentially supercyclic vector for the unitary
part\/ $U\kern-1pt$ of\/ $T$ such that\/ ${U^ny\notwconv0}$, then for every
nonzero\/ $z$ such that\/ ${\<U^ny\,;z\>\not\to0}$ either
\vskip2pt
\begin{description}
\item{$\kern-2pt$}
$\liminf_n|\<U^ny\,;z\>|=0,\;\;$ or
\vskip2pt
\item{$\kern-2pt$}
$\limsup_k|\<U^{n_k}y\,;z\>|<\|z\|\kern1pt\|y\|$ for some subsequence\/
$\{U^{n_k}\}_{k\ge0}$ of\/ $\{U^n\}_{n\ge0}$.
\end{description}
\end{description}
\end{corollary}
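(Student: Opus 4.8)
The plan is to reduce Corollary 6.1 to Theorem 6.2 by passing to the unitary part of the contraction $T$ via the Nagy--Foia\c s--Langer decomposition, and then translating the functional-analytic statement of Theorem 6.2(ii) into Hilbert-space/inner-product language through the Riesz Representation Theorem, exactly as was done in Corollary 5.2. First I would invoke the Nagy--Foia\c s--Langer decomposition (recalled in the paragraph after Lemma 4.1) to write $T = U \oplus C$ on $\X = \X_u \oplus \X_{cnu}$, where $U$ is unitary on the reducing subspace $\X_u$ and $C$ is a completely nonunitary contraction on $\X_{cnu}$. A Hilbert-space contraction is automatically power bounded, so assumption (a) of Theorem 6.2 is free; and a contraction is trivially of type $1$ (Hilbert space is type $1$), so Theorem 6.2 applies to $T$ itself.

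Next I would observe that weak l-sequential supercyclicity of $T$ forces the completely nonunitary summand to be absent or, more precisely, forces the relevant behavior to live on the unitary part. The cleanest route is: apply Theorem 6.2 to $T$ directly. If alternative (i) holds for $T$, we are done --- $T$ is weakly stable. If (i) fails, Theorem 6.2(ii) gives a weakly l-sequentially supercyclic vector $y$ for $T$ with $T^n y \notwconv 0$, and a functional $f_0$ (equivalently, by Riesz, a vector $z$ with $\|z\|=1$) with $f_0(T^n y) \not\to 0$ satisfying the stated dichotomy. The point is then to argue that such a $y$ must in fact be a weakly l-sequentially supercyclic vector for the \emph{unitary part} $U$: since $C$ is a completely nonunitary contraction, $C^n w \wconv 0$ for every $w \in \X_{cnu}$ (this follows because a c.n.u.\ contraction on a Hilbert space is weakly stable --- indeed strongly stable in the $\C_{0\cdot}$ case, and in general the c.n.u.\ part of a contraction tends weakly to zero; alternatively, apply Claim 1 in the proof of Theorem 6.2 within $\X_{cnu}$). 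Hence the condition $T^n y \notwconv 0$ forces the $\X_u$-component of $y$ to be nonzero and to carry all the non-vanishing; projecting the approximating sequences $\alpha_k T^{n_k} y \wconv x$ onto $\X_u$ and using that the $\X_{cnu}$-component washes out weakly, one identifies the $\X_u$-component of $y$ as weakly l-sequentially supercyclic for $U$, with $U^n(P_u y) \notwconv 0$. Finally, rewrite $f_0$ via Riesz as $\<\,\cdot\,;z\>$ and note $\|f_0\| = \|z\|$ and, since $U$ is unitary (hence an isometry), $\limsup_k \|U^{n_k} y\| = \|y\|$; substituting into the two alternatives of Theorem 6.2(ii) yields precisely the two alternatives of Corollary 6.1(ii).

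The main obstacle I anticipate is the identification step: verifying rigorously that a weakly l-sequentially supercyclic vector $y$ for $T$ with $T^n y \notwconv 0$ yields a weakly l-sequentially supercyclic vector for $U$ on $\X_u$. One has to be careful that weak l-sequential supercyclicity involves approximating \emph{every} $x \in \X$ (here every $x \in \X_u$, viewed inside $\X$ as $x \oplus 0$), along a subsequence depending on $x$, and that the completely nonunitary component $\alpha_k C^{n_k} (P_{cnu} y)$ genuinely tends weakly to $0$ even though the scalars $\alpha_k$ may be unbounded --- but boundedness of $\{|\alpha_k|\}$ along a suitable subsequence is exactly what the argument in the proof of Theorem 5.1 (via $(d_1)\!\!\!\big\slash$) and the hypothesis $f_0(T^n y)\not\to 0$ provide, so this can be routed through the machinery already established. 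Once boundedness is in hand, $\alpha_k C^{n_k}(P_{cnu}y) \wconv 0$ follows from $C^{n_k}(P_{cnu}y)\wconv 0$, and the weak limit of $\alpha_k T^{n_k} y$ collapses to the weak limit of its $\X_u$-component; everything else is a direct transcription. Alternatively, and perhaps more cleanly for the writeup, one can bypass the decomposition entirely: apply Theorem 6.2 to $T$, and in alternative (ii) simply note that by Remark 5.1 (or Proposition 4.1/Corollary 5.2) a weakly l-sequentially supercyclic contraction on a Hilbert space that is not weakly stable must have its non-vanishing orbit governed by a unitary direct summand, so that $T^n y$ and $U^n y$ agree in their weak-limit behavior; then the Riesz translation finishes the proof.
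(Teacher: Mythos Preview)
Your ingredients match the paper's --- the Nagy--Foia\c s--Langer decomposition $T=C\oplus U$, weak stability of the completely nonunitary part $C$, Theorem 6.2, the Riesz representation, and the isometry identity $\|U^{n_k}y\|=\|y\|$ --- but you apply Theorem 6.2 to the wrong operator. The paper applies it directly to $U$ on $\U$, not to $T$ on $\X$. Weak l-sequential supercyclicity passes to direct summands trivially: orthogonal projection is weakly continuous, so $\alpha_k T^{n_k}y\wconv x$ implies $\alpha_k U^{n_k}(P_\U\, y)\wconv P_\U\, x$ with no appeal whatsoever to boundedness of $\{\alpha_k\}$; your anticipated obstacle is a red herring. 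Thus $U$ is itself a power bounded weakly l-sequentially supercyclic operator on the Hilbert space $\U$, and Theorem 6.2 applies to $U$. If alternative (i) holds for $U$ then $U^n\wconv O$, hence $T^n=C^n\oplus U^n\wconv O$ and (i) of the corollary holds; otherwise alternative (ii) for $U$, read through Riesz and the isometry identity, is verbatim (ii) of the corollary.

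Your route through Theorem 6.2 for $T$ leaves a genuine gap at the final step. The corollary's alternative (ii) is a universal statement over weakly l-sequentially supercyclic vectors $y\in\U$ for $U$, whereas Theorem 6.2(ii) for $T$ speaks of vectors in $\X$ that are weakly l-sequentially supercyclic for $T$; a vector in $\U$ is generally not supercyclic for $T$ when $\U^\perp\ne\{0\}$, so you cannot simply instantiate. Moreover the bound delivered by Theorem 6.2(ii) for $T$ is $\|f\|\,\limsup_k\|T^{n_k}y\|$ with the $\U^\perp$-component of $y$ contributing, not $\|z\|\,\|y\|$ for $y\in\U$. Closing this gap would force you to invoke Theorem 6.2 for $U$ anyway --- which is where the paper starts.
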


\begin{proof}
Let $T$ be a contraction on a Hilbert space $\X.$ By the
Nagy--Folia\c s--Langer decom\-position for Hilbert-space contractions (see,
e.g., \cite[p.8]{NF} or \cite[p.76]{MDOT}), $\X$ admits an orthogonal
decomposition $\X={\U^\perp\!\oplus\U}$, where $T$ is uniquely a direct sum
of a completely nonunitary contraction $C={T|_{\U^\perp}\in\B[\U]}$ and a
unitary operator $U={T|_\U\in\B[\U]}$ (where any of these parcels may be
missing):
$$
T=C\oplus U,
$$
where $C$ is the completely nonunitary part of $T$ and $U\kern-1pt$ is the
unitary part of $T\kern-1pt.$ Every completely nonunitary contraction is
weakly stable (see, e.g., \cite[p.55]{Fil} or \cite[p.106]{MDOT})$.$ Thus $T$
is weakly stable if and only if $U\kern-1pt$ is weakly stable; that is,
$$
C^n\wconv O
\qquad\hbox{and}\qquad
T^n\wconv O
\;\;\,\hbox{if and only if}\,\;\;
U^n\wconv O.
$$
Suppose $U\kern-1pt$ acts on a nonzero space (otherwise the result is
trivially verified)$.$ If $T={C\oplus U}$ is weakly l-sequentially supercyclic
(or supercyclic), then both $C$ and $U\kern-1pt$ are weakly l-sequentially
supercyclic$.$ Thus the result follows by Theorem 6.2 and by the Riesz
Representation Theorem in Hilbert space, since $U\kern-1pt$ is an isometry.
\end{proof}

\vskip2pt
Weak l-sequential supercyclicity and weak stability for unitary operators
are discussed in \cite[Theorem 5.1]{Kub1} in terms of a condition similar to
the so-called angle criterion for supercyclicity --- see, e.g.,
\cite[Theorem 9.1]{BM2}.

\vskip-13pt\noi
\bibliographystyle{amsplain}

\end{document}